\newcommand\eqsubref[1]{(\subref{#1})}
\newcommand{\Z}{{\mathbb{Z}}}
\newcommand{\inj}{\hookrightarrow}
\newtheorem{theorem}{Theorem}[section]
\newtheorem{lemma}[theorem]{Lemma}
\theoremstyle{definition}  
\newtheorem{definition} [theorem] {Definition}
\newtheorem{question}{Question}
\newtheorem{thm}[theorem]{Theorem}
\theoremstyle{definition}
\newtheorem{rem}[theorem]{Remark}
\title{Studying links via book links: A Markov Theorem}
\author{Rom\'an Aranda}
\address{Department of Mathematics, University of Nebraska-Lincoln}
\email{jarandacuevas2@unl.edu}
\author{Fraser Binns}
\address{Department of Mathematics, Princeton University}
\email{fb1673@princeton.edu}
\author{Margaret Doig}
\address{Department of Mathematics, Creighton University}
\email{margaretdoig@creighton.edu}
\thanks{FB was supported by the Simons Grant {\em New structures in low-dimensional topology}}
\date{\today}
\begin{document}

\begin{abstract}
    We introduce ``book links" as a generalization of braids in open book decompositions; this new class of objects includes both braids and plats as special cases. We then prove a version of Markov's theorem in this general setting by extending the theory of open book foliations.
\end{abstract}

\maketitle

\section{Introduction}

In this paper, we introduce a concept that interpolates between the classical ideas of braids and plats. Braids are of wide mathematical interest in fields including topology, group theory, contact geometry, and algebraic geometry --- see the survey article~\cite{birman2005braids}. In particular, every link in $S^3$ can be encoded as a braid and thus understood via Artin's braid group~\cite{artin1925theorie}. Our proposed generalization of braids allows us, in principle, to reduce various topological questions from link theory to algebraic questions as in the braid case. It also allows us to extend the rich library of foliation-related techniques developed for braids to a larger class of links.

\subsection{Book links}

Let $Y$ be an arbitrary 3-manifold with a fixed open book decomposition, that is, a \emph{link} $B\subset Y$ with a fibration $p: (Y-B)\rightarrow S^1$ with $D_\theta$ pages, equivalently, a circle-valued Morse function on $Y-B$ without critical points. A link $L$ is an embedding $\bigsqcup S^1\inj Y-B$. The traditional definition of a \emph{closed braid} is a link that intersects each page $D_\theta$ in the fibration transversely, equivalently, a link where $p\circ L$ is also a Morse function without critical points. We define a \emph{closed book link} to be a link that intersects pages transversely except at finitely many non-degenerate critical points, equivalently, where $p\circ L$ is a Morse function. We will study book links up to \emph{book link isotopy} --- isotopies through book links, equivalently, through book links with the same number of critical points.  We will generally suppress references to the 3-manifold and open book decomposition and will assume that any link is embedded in a fixed 3-manifold equipped with an open book decomposition and that any book link structure is the one induced by that open book decomposition. One can define a book link analogue of an open braid as an appropriate non-degenerate tangle in a thickened surface, modulo isotopies rel boundary that fix the number of critical points.

\begin{figure}[hb]\centering
    \begin{subfigure}{0.2\textwidth}\centering
        \includegraphics[scale=0.5]{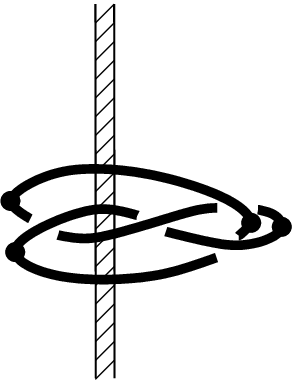}\caption{}\label{fig:fig8-20}
    \end{subfigure}
    \begin{subfigure}{0.2\textwidth}\centering
        \includegraphics[scale=0.5]{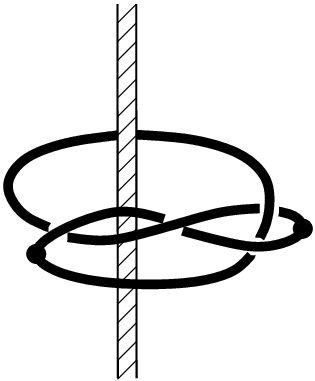}\caption{}\label{fig:fig8-11}
    \end{subfigure}
    \begin{subfigure}{0.2\textwidth}\centering
        \includegraphics[scale=0.5]{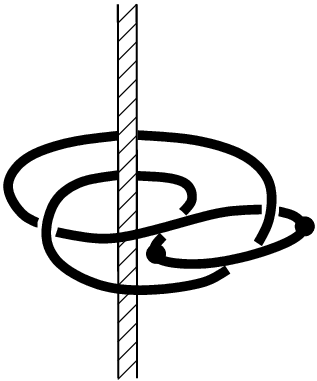}\caption{}\label{fig:fig8-12}
    \end{subfigure}
    \begin{subfigure}{0.2\textwidth}\centering
        \includegraphics[scale=0.5]{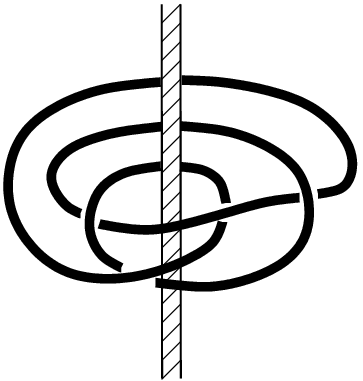}\caption{}\label{fig:fig8-03}
    \end{subfigure}
\caption{Book link realizations of the figure 8 knot which are \eqsubref{fig:fig8-20} 2-bridge, 0-braid, \eqsubref{fig:fig8-11} 1-bridge, 1-braid, \eqsubref{fig:fig8-12} 1-bridge, 2-braid, and \eqsubref{fig:fig8-03} 0-bridge, 3-braid. The solid dots indicate critical points while the binding is the vertical arc.}\label{fig:generic_knot}
\end{figure}

Each of these book link isotopy classes has a pair of invariants, the \emph{bridge index} or number of critical points, and the \emph{braid index} or minimum geometric intersection number with a page (minimized over the book link isotopy class). Just as a braid is a book link with bridge index zero, but potentially with high braid index, a plat closure can be viewed naturally as a book link that has arbitrarily high bridge index but braid index zero. The concept of a book link thus connects these two concepts. Figure~\ref{fig:generic_knot} shows four different book link realizations of the same knot in $S^3$ with unknotted binding and various bridge and braid indices. 

Throughout this paper the reader may find it helpful to take  $\mathbb{R}^3$ with cylindrical coordinates as a prototypical example of a manifold equipped with an open book. Here the $z$-axis is the binding of the open book and the pages $H_\theta$ are the points sharing the same angle $\theta$. In this case, a braided knot is an embedding {$\textbf{r}: S^1 \inj \mathbb{R}^3$} that misses the $z$-axis with tangent vector transverse to $H_\theta$, while a book knot is an embedding where, if the tangent vector is parallel to $H_\theta$, then the normal vector is not. Equivalently, if $\textbf{r} = (r,\theta,z)$ is parameterized using cylindrical coordinates, a braid obeys $\theta'(t) \neq 0$, while a book link has $\theta$ with only non-degenerate critical points, i.e., if $\theta'(t)=0$, then $\theta''(t)\neq 0$. The general case of a link in an open book decomposition of a 3-manifold is locally identical.

While book links are natural objects of study in their own right, they have appeared recently in the context of Floer homology.  For example, a key step in the proof of Baldwin-Sivek's classification of genus one ``nearly fibered" knots amounts to classifying bridge index one book links satisfying a simple topological condition~\cite[Section 6]{baldwin2022floer}. In forthcoming work, the second author and Dey classify links whose link Floer homology satisfies a simple algebraic condition; one case of their classification includes a family of bridge number one book links together with the bindings of the corresponding open books~\cite{HFalmostbraids}.

\subsection{Classification results}\label{section:mainq}
Given that book links are a topological generalization of braids, we have the following natural question: To what extent do topological results concerning braids extend to book links? In this paper, we address two fundamental instances of this question. 

One foundational result in the study of braids is \emph{Alexander's Theorem}, that any link in any three-manifold can be isotoped to a braid~\cite{alexander1923lemma,pavelescu2008braids}. The analogous statement in the setting of book links is discussed in Section~\ref{sec:Alex}. 

\begin{theorem}[Alexander's Theorem for book links]\label{thm:AlexV1}
    Let $Y$ be a 3-manifold equipped with an open book decomposition. Any link $L\subset Y$ can be isotoped to a book link $\lambda$ with any desired (even) number of critical points on each component. 
\end{theorem}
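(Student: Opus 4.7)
The strategy has two steps. First, I would invoke Pavelescu's generalization of Alexander's theorem~\cite{pavelescu2008braids} to isotope $L$ to a closed braid in the given open book decomposition of $Y$. By definition, a closed braid is a book link with no critical points of $p \circ L$, so after this step every component has zero critical points. Second, I would add non-degenerate critical points in pairs locally, until the prescribed numbers are reached on each component.

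The key local move is a \emph{zigzag}. Select a small arc $\alpha$ on a component $L_i$ of the braid, disjoint from the binding and from $L \setminus \alpha$. A tubular neighborhood of $\alpha$ in $Y \setminus B$ admits local coordinates in which the page fibration restricts to the angular coordinate $\theta$ and $\alpha$ is a standard arc transverse to the pages. Inside this chart, and supported in the interior of $\alpha$, I would replace $\alpha$ by an embedded arc with the same endpoints whose $\theta$-coordinate has exactly one non-degenerate local maximum and one non-degenerate local minimum --- the standard finger-move picture. The replacement is realized by an ambient isotopy of $Y$ supported in a small ball that avoids $B$ and $L \setminus \alpha$, and it increases the critical-point count on $L_i$ by exactly two without affecting the other components.

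Given target even numbers $2n_1, \ldots, 2n_c$ of critical points on the components $L_1, \ldots, L_c$, I would then iterate the zigzag move $n_i$ times on disjoint sub-arcs of $L_i$. The output is a book link, isotopic to $L$, with exactly the prescribed number of critical points on each component. Note that the parity hypothesis is genuinely needed: for a Morse function $S^1 \to S^1$, local maxima and minima must alternate around the circle, so the count on each component is forced to be even.

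I do not anticipate a serious obstacle. The only technical points are realizing the zigzag as an ambient isotopy of $Y$ rather than merely a homotopy of maps $S^1 \to Y$, which is standard once a local chart is fixed, and checking that the critical points introduced are non-degenerate, which is immediate from an explicit local model such as perturbing the $\theta$-coordinate by a small multiple of $\sin$ on a short sub-interval with amplitude just large enough to force $\theta'$ to change sign twice.
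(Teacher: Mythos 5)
Your proposal is correct, but it takes a different route from the paper. The paper does not pass through a braid: it first perturbs $L$ to generic position so that $p\circ L$ is Morse (yielding a book link with some a priori uncontrolled number of critical points), then adds critical points in pairs by the same local zigzag you describe, and --- this is the nontrivial half that your argument avoids --- cancels excess critical points in pairs by sweeping a segment containing two critical points through a triangle to the binding and pushing it across, \`a la the Alexander/Birman braiding trick, with a finiteness argument to handle obstructing strands. You instead outsource exactly that hard step to Pavelescu's braiding theorem, reducing every component to zero critical points at the outset, after which only the (easy) addition of zigzag pairs is needed; your parity remark correctly accounts for the ``(even)'' hypothesis. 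This is a perfectly valid and arguably cleaner proof of the statement as posed. The trade-off is generality: the paper actually proves the relative version (Theorem~\ref{thm:isotoping}), where an auxiliary book link $\beta$ must be carried along and only moved by a book link isotopy, and that stronger statement is what feeds into Lemma~\ref{lem:annuli} and the Markov theorem. Your argument would need a relative form of Pavelescu's theorem (braiding $L$ in the complement of $\beta$ while preserving $\beta$'s book link type) to recover that; for the absolute statement you were asked to prove, no such issue arises.
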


This result can also be thought of as a generalization of the fact that every link can be put bridge position. Another foundational result in braid theory is \emph{Markov's Theorem}, which characterizes when distinct braid representatives of a fixed link are isotopic~\cite{markov1935uber, birman1975braids}. The plat version of the Markov Theorem was proven by Birman in \cite{Birman1976MarkovForPlats}. Our main goal in this paper is to generalize Markov's theorem to the book link setting. We say two book links are of the same \emph{type} if they are of the same link type and the corresponding components have the same number of critical points. 

\begin{theorem}[Markov's Theorem for book links]\label{thm:Markov}
    Let $Y$ be a 3-manifold equipped with an open book decomposition. If $\lambda_1$ and $\lambda_2$ are book links in $Y$ of the same type, then they are related by a sequence of book link isotopies and stabilizations and destabilizations.
\end{theorem}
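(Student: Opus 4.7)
The plan is to generalize the braid foliation / open book foliation machinery of Birman--Menasco and Ito--Kawamuro to book links, and apply it to a cobordism between $\lambda_1$ and $\lambda_2$. Since $\lambda_1$ and $\lambda_2$ are isotopic as links, there exists an ambient isotopy of $Y$ carrying one to the other; this isotopy traces out an embedded surface $\Sigma\subset Y\times[0,1]$ which is a disjoint union of annuli, one per component, with $\partial\Sigma=\lambda_1\times\{0\}\sqcup -\lambda_2\times\{1\}$. After a small perturbation rel boundary we may put $\Sigma$ in generic position with respect to the product open book on $Y\times[0,1]$ induced by $p$.

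The pages of this product open book then cut out on $\Sigma$ a singular foliation --- an open book foliation in the sense of Ito--Kawamuro, but now decorated with additional singularities along the boundary coming from the critical points of the book links. Away from $\partial\Sigma$, the interior singularities are the usual elliptic points (where $\Sigma$ meets the thickened binding) and hyperbolic points (where $\Sigma$ is tangent to a page). Along $\partial\Sigma$, each critical point of $\lambda_i$ contributes a boundary half-singularity modeled locally on a fold of $p\circ\lambda_i$. I would first establish an existence and normal-form theorem for such decorated foliations, and then identify a finite list of foliation-changing moves supported in small disks on $\Sigma$. These moves should come in two flavors: \emph{(a)} moves interior to the annulus, or entering and leaving the same boundary component while keeping the critical count fixed, which descend to book link isotopies; and \emph{(b)} moves that insert or delete a single elliptic-hyperbolic pair adjacent to one boundary component, which realize the Markov stabilization and destabilization. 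A complexity reduction --- for example by induction on the number of hyperbolic singularities, following Birman--Menasco's ``change of foliation'' arguments --- then expresses the passage from $\lambda_1$ to $\lambda_2$ as a finite sequence of such moves.

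The main obstacle is the treatment of the boundary. In the braid case both boundary components are everywhere transverse to the pages, so the foliation is collared by trivial $S^1$-families and the destabilization moves one needs are few and local; here the critical points of the $\lambda_i$ force the leaves near $\partial\Sigma$ to fold, producing boundary tangencies whose combinatorics must be tracked and whose count on each component must be preserved by every reduction move. One must pin down a suitable list of boundary-respecting moves, verify that Markov stabilization corresponds precisely to the simplest boundary-nontrivial foliation change, and rule out configurations where the only available simplification would alter the bridge index on some component. Once this boundary analysis is set up correctly, the global argument parallels the classical braid case.
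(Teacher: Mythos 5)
Your overall strategy --- cobounding annuli, open book foliations with extra boundary singularities recording the critical points, tile moves realizing isotopies and stabilizations --- is the same as the paper's. But there are two concrete problems with the way you set it up. First, you place the trace of the isotopy in $Y\times[0,1]$ and propose to foliate it by a ``product open book.'' The machinery you invoke (Birman--Menasco, Ito--Kawamuro, and the extension developed in this paper) lives on surfaces embedded in the $3$-manifold $Y$: elliptic points are intersections with the binding, leaves are intersections with actual pages, and --- crucially --- the final conversion of a normalized foliation into a book link isotopy works by literally pushing $\lambda_-$ across the annulus inside $Y$, leaf by leaf. A surface in $Y\times[0,1]$ severs that link: the foliation induced by $p$ alone does not see the time coordinate, so there is no direct way to read off a sequence of moves on links in $Y$ from it; you would need a Cerf-theoretic analysis of the pair $(p,t)$ instead, which is a different proof. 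The paper avoids this by its Lemma~\ref{lem:annuli}: approximate by PL links, decompose the isotopy into $\Delta$-moves, and obtain a finite chain of intermediate book links of the same type joined by annuli embedded in $Y$ itself (the single trace annulus is in general only immersed in $Y$, which is exactly why this decomposition step is needed and cannot be skipped).

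Second, your endgame --- induction on the number of hyperbolic singularities via ``change of foliation,'' with stabilization appearing as the simplest boundary-nontrivial move --- is not what makes the argument close, and the termination and boundary-preservation claims you flag as ``to be verified'' are the actual content. In the paper, stabilization is spent early, in Lemma~\ref{lem:onlytype0}, to eliminate all tile types except the $\partial$- and $h$-tiles; the hyperbolic singularities are not then reduced away. Instead, Lemma~\ref{lem:rotate} is used to rearrange tiles so that every $h$-tile sits on the separating cycle of the annulus with one $\partial$-tile on each boundary component (a counting argument using the $2d$ extremal points on each boundary component shows this normal form is achievable), and in that configuration the foliation itself prescribes an isotopy carrying $\lambda_-$ to $\lambda_+$ through book links, with the $\partial$-tile pairs guaranteeing the critical points persist throughout. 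If you want to salvage your plan, replace the $Y\times[0,1]$ surface with the embedded-annuli-in-$Y$ decomposition and replace the complexity reduction with a normalization of the tile graph along the annulus's separating cycle.
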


Here the operations of stabilizations and destabilizations are defined exactly as in the braid case. It follows directly from the theorem that any two book link representatives of the same link type --- even if not the same book link type --- can be connected via book link isotopies, stabilization/destabilization, and the creation/cancellation of critical points. 

To prove this theorem we extend the theory of braid foliations to surfaces with book link boundary. For braids, this machinery was first developed by Birman and Menasco in a sequence of papers~\cite{birman1992studying1,birman1991studying,birman1993studying,birman1990studying,birman1990studyingerratum,birman1992studying5,birman1992studying}. The text of LaFountain and Menasco gives an excellent exposition on braids and foliation techniques which we adapt here~\cite{lafountain2017braid}. 

\subsection{A spectrum of invariants}\label{section:basics}

We discuss invariants of links derived from book links which encompass and unify the classical braid and bridge indices. Given that every link can be isotoped to a book link with any bridge index (Theorem~\ref{thm:isotoping}), we can extract a spectrum of invariants from book links. More details appear in the paper of Doig and Gehringer~\cite{doigXXXXspectrum}, which shows the behavior of the spectrum for split and composite links and calculates the spectrum for prime knots through 9 crossings.

\begin{definition}
     The \emph{book index spectrum} of a link $L$ is the sequence $\big\{b_L(d)\big\}_{d=0}^\infty$, where $b_L(d)$ is the minimum braid index of a book link representative of $L$ with bridge index $d$.
\end{definition}

For a link $L$ in $S^3$ with the standard open book decomposition, the quantity $b_L(0)$ recovers the classical definition of the braid index of $L$, i.e., the minimum braid index over all braid representatives of the link. Similarly, the first $d$ for which $b_L(d)=0$ is exactly the classical bridge index of $L$. Observe that if $L$ has a book link representative of braid index $n$ and bridge index $d$ then $L$ admits a book link representative with braid index $n$ and bridge index $d+1$ that can be obtained by perturbing a segment of $L$ without critical points to introduce a pair of canceling critical points. It follows that $b_L:\Z^{\geq 0}\to \Z^{\geq 0}$ is a non-increasing function. 

One can show that the function $b_L$ is strictly decreasing between $0$ and the bridge index of $L$. It follows that torus links and other BB-links (those with the same classical bridge and braid index, say $n$) satisfy $b_L(d)=n-d$ for $0 \leq d \leq n$ and $b_L(d)=0$ else.  For an unknot $U$, $b_U(0)=1$ and $b_U(k)=0$ for $k> 0$. 

For a fixed link $L$, a book link representative of $L$ may be substantially ``simpler" than the simplest braid representative of $L$ in the sense that $b_L(d)$ may be arbitrarily larger than $b_L(d+1)$. For example, Schubert's classification of 2-bridge knots includes knots with arbitrarily high braid index~\cite{schubert1956knoten}, i.e., with $b_{L_n}(0)=n$, $b_{L_n}(1)=1$, and $b_{L_n}(i)=0$ for $i\geq 2$. By adding split unlinked components to $L_n$, it can be shown that there are links $L$ for which $b_L(d)-b_L(d+1)$ is arbitrarily large for any fixed $d$. 

The book index spectrum resembles the bridge spectrum of a knot introduced by Zupan to capture bridge indices of knots with respect to higher genus Heegaard surfaces of $S^3$ \cite{zupan2014bridge}. 

\subsection{Questions}\label{section:questions}
We end this section with some questions. First, recall that braids admit equivalent characterizations via mapping class groups or configuration spaces \cite{artin1925theorie,birman1975braids}. 

\begin{question}
    Do book links admit interpretations analogous to the mapping class or configuration space interpretations of braids?
\end{question}

\noindent Book links can be thought of as ``closures'' of tangles in thickened surfaces with the same number of critical points of index 0 and 1. With this in mind, one might relate such tangles to objects of a more algebraic flavour by appealing to an appropriately modified ``tangle category" --- see~\cite[Section~3]{turaev1990operator}. For example, in~\cite[Theorem 3.2]{turaev1990operator}, one could remove the relations corresponding to (a),(b), (c) and (f) in~\cite[Figure 4]{turaev1990operator}.  To move from an algebraic classification of such tangles to an algebraic classification of book link closures would require a slightly more subtle analysis than in the braid case, to account for isotopies which push the critical points of the book link through, for instance, every page of the open book.

Next is the problem of distinguishing book links up to book link isotopy. Given a book link $\lambda$ one can form the link $\lambda\cup B$ where $B$ is the axis of $\lambda$. Any classical link invariant of $\lambda\cup B$ is a book link isotopy invariant of $\lambda$. This implies, for instance, that the annular Whitehead link and its mirror --- $T_0$ and $T_{-1}$ --- can be distinguished via the signature of $T_i\cup B$.

\begin{question}
    What are the book link isotopy classes of the unknot with fixed braid and bridge indices?
\end{question}

\noindent Baldwin-Sivek implicitly gave a partial result in this direction, classifying specific types of braid index $3$ representatives of the unknot~\cite[Proof of Theorem 6.1]{baldwin2022floer}. They use this to prove various knot detection results for knot Floer homology and Khovanov homology. One might hope to obtain further link detection results by giving similar book link classifications.

We may also ask how difficult it is to distinguish book link isotopy classes:  
\begin{question}
    What is the computational complexity of determining whether two book links are book link isotopic?
\end{question}
\noindent For example, Hass, Lagaris, and Pippenger demonstrated the first result along these lines by showing that distinguishing a PL link from the unlink is in NP, as is identifying a link as split~\cite{hass1999computational}.

Open book foliations have a number of important topological applications. 
\begin{question}
    Can the techniques of open book foliations be extended to study surfaces in book link complements?
\end{question}
\noindent For example, open book foliations can be used to prove the Bennequin-Eliashberg inequality~\cite[Theorem~4.3]{ito2014open} and the Jones conjecture~\cite{lafountain2015embedded}, to define practical invariants such as self-linking number for transverse knots, and to relate the geometry of certain $3$-manifolds to the fractional Dehn twist coefficients of their possible open book structures~\cite[Theorem~8.3]{ito2012essential}. We note that there has also been some recent work adapting aspects of braid foliation theory to the setting of plat closures~\cite{menasco2024studying, solanki2023studying}, which suggests possible success in extending these techniques to the overarching book link case.

Similarly, it may be enlightening to extend other braid techniques to book links.
\begin{question}
    Do any braid invariants --- for example the self-linking number --- have book link analogues?
\end{question}

Finally, given Theorem~\ref{thm:Markov}, we have the following natural question:
\begin{question}
    Fix $k\geq 0$. For each $n\in\Z$, do there exist braid index $n$ book links which are only booklink isotopic after exactly $k$ stabilizations?
\end{question}

\subsection*{Outline} In Section~\ref{sec:Alex}, we prove a stronger version of Theorem~\ref{thm:AlexV1}, Alexander's Theorem for book links. In Section~\ref{section:foliations}, we consider an embedded surface bounded by a book link and study the foliation induced upon it by the open book decomposition of the manifold. In Section~\ref{section:markov}, we prove Theorem~\ref{thm:Markov} using the open book foliations of embedded annuli bounded by book links. 

\subsection*{Acknowledgements}
This project began at the AMS North Central Sectional Meeting at Creighton University in Fall 2023. We would like to thank the organizers of this meeting and its special sessions.

\section{An Alexander Theorem for Book Links}\label{sec:Alex}

The concept of a book link is generic enough to have a version of Alexander's Theorem which allows us to convert any link into a book link with the desired number of critical points. Theorem \ref{thm:AlexV1} in the introduction is a particular case ($\beta =\emptyset$) of the following result. 

\begin{thm}[Alexander's Theorem for book links]\label{thm:isotoping}
    Let $Y$ be a 3-manifold with a fixed open book decomposition and $\beta$ be a book link. Then, for any $L\subset Y\setminus\beta$, the link $L\cup\beta$ can be isotoped to a book link $\lambda \cup \beta$, where $\lambda$ has any desired (even) number of critical points on each component, and the isotopy restricted to $\beta$ is a book link isotopy. 
\end{thm}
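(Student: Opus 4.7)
The plan is to proceed in two steps: first, isotope $L$ in the complement of $\beta$ to a braid---that is, a book link with zero critical points on each component; second, introduce canceling pairs of critical points on each component of the resulting braid until the prescribed even counts $d_i$ are reached. Note that the theorem allows the restriction of the isotopy to $\beta$ to be any book link isotopy, but the argument below actually keeps $\beta$ pointwise fixed, which is strictly stronger.

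For the first step, I would first perturb $L$ so that $p|_L\colon L\to S^1$ is Morse; generically this places $L$ in the class of book links, with some even number of critical points on each component. I would then invoke a relative version of Pavelescu's Alexander theorem for open books~\cite{pavelescu2008braids}: his proof cancels critical points of $p|_L$ by a sequence of local isotopies, each supported in a small $3$-ball. Since $\beta$ together with the binding $B$ forms a $1$-complex in the $3$-manifold $Y$, general position lets me take each such $3$-ball disjoint from $\beta\cup B$. Hence the straightening of $L$ is supported in $Y\setminus(\beta\cup B)$, leaving $\beta$ untouched.

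For the second step, given the braid $L'$ and desired counts $d_i\geq 0$, I would perform $d_i/2$ small ``bump'' isotopies on each component $L'_i$, each one producing a canceling max-min pair of critical points. Every bump is supported in a small $3$-ball along $L'_i$; by general position these balls may be chosen disjoint from $\beta$. Iterating yields the desired book link $\lambda$ with $d_i$ critical points on each component $\lambda_i$, and the restriction of the whole isotopy to $\beta$ is the identity.

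The main obstacle is the localization in the first step---ensuring that the classical Alexander moves can be carried out in $3$-balls disjoint from $\beta$. This hinges on the fact that Pavelescu's argument proceeds via moves supported in $3$-balls and along arcs on pages, and such balls and arcs can be perturbed off the $1$-dimensional set $\beta$. Once this technical point is verified, the bump isotopies in the second step are routine local surgeries.
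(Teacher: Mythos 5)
There is a genuine gap in your first step, and it is precisely the point the paper's proof is built around. The cancellation of a max--min pair on $L$ is not a move supported in a small $3$-ball that can be slid off $\beta\cup B$ by general position. The move works by sweeping the offending segment across the binding: one chooses an interval family of arcs in the pages joining the segment to a point of $B$, and these arcs sweep out a $2$-dimensional triangle along which the segment is pushed through the axis. That triangle must meet $B$ by construction, so the supporting region cannot be made disjoint from the binding. More importantly, a $2$-dimensional disk and the $1$-dimensional link $\beta$ (or another strand of $L$ itself) in a $3$-manifold generically intersect in isolated points, and such intersections are stable --- they cannot be removed by a perturbation. Your appeal to general position (``such balls and arcs can be perturbed off the $1$-dimensional set $\beta$'') is therefore false for exactly the objects that matter: the swept disks. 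This is why the relative statement does not follow formally from Pavelescu's theorem.

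The paper's proof handles this head-on: when the swept triangle unavoidably meets $\beta$ or another segment of $\lambda$, one locally modifies the obstructing strand, introducing a new canceling pair of critical points on it, after which the (now four) critical points can be resolved pairwise; one then argues this process terminates in finitely many steps (Figure~\ref{fig:cancelmulti}). That finger-move-and-terminate argument is the essential content of the relative Alexander theorem, and it is what your proposal is missing. Your second step (adding canceling pairs by small bumps) is fine --- those really are supported in arbitrarily small balls that can be chosen off $\beta$ --- and your overall two-stage architecture (reduce to a braid, then reintroduce pairs) is a reasonable reorganization of the paper's ``add if too few, cancel if too many'' scheme; but the reduction stage needs the obstruction analysis, not general position.
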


\begin{figure}
    \begin{subfigure}[c]{0.49\textwidth}\centering
        \begin{subfigure}{0.95\textwidth}\centering
            \includegraphics[scale=0.4]{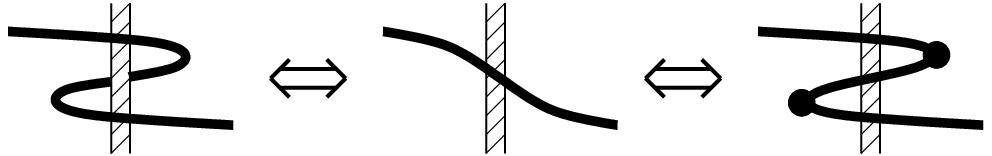}\caption{}\label{fig:add}
        \end{subfigure}
        \begin{subfigure}{0.95\textwidth}\centering
            \includegraphics[scale=0.4]{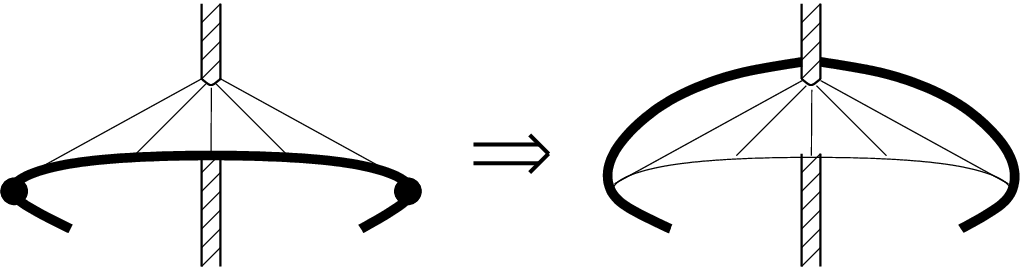}\caption{}\label{fig:cancel}
        \end{subfigure}
    \end{subfigure}
    \begin{subfigure}[c]{0.49\textwidth}\centering
        \includegraphics[scale=0.4]{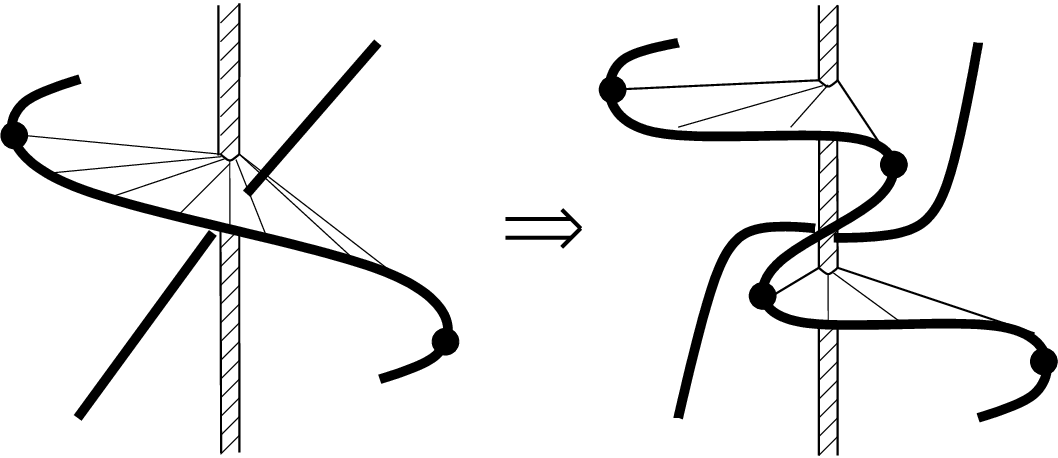}\caption{}\label{fig:cancelmulti}
    \end{subfigure}
    \caption{The book link structure and braid/bridge indices may be altered \eqsubref{fig:add} by stabilization and destabilization through the braid axis or by a local perturbation; and \eqsubref{fig:cancel} by an isotopy of a segment with a pair of critical points which has a free path to the braid axis. A segment without a free path may be modified as in \eqsubref{fig:cancelmulti}.}\label{fig:alexander}
\end{figure}
    
\begin{proof}
    A local perturbation will resolve any degenerate critical point into 1 or 2 non-degenerate critical points, i.e., we may immediately convert a link $L$ into a book link $\lambda$. 
    
    If $\lambda$ has too few critical points, it is easy to add additional ones, just as we may stabilize or destabilize in the traditional braid fashion; see Figure~\ref{fig:add}. This operation may be realized by an isotopy which is the identity away from the replacement area.
    
    If $\lambda$ has too many critical points, they may be resolved in pairs following an argument similar to that of Lemma~2.3 in \cite{lafountain2017braid}. First, suppose there are two critical points connected by a segment with no other critical points as in Figure~\ref{fig:cancel}. If there is a point on the braid axis and an interval family of continuously varying arcs, each lying in a single page, which connect the segment to that point, and if these paths are disjoint from the rest of $\lambda \cup \beta$, then they will sweep out a triangle in the complement of $\lambda \cup \beta$. We may isotope the segment through this triangle and push it to the other side of the braid axis; if we are careful, the intermediate segments will contain a matched pair of critical points until they converge in a single point when the segment intersects the braid axis. Then the intermediate segment passes through the braid axis and the critical points are eliminated.

    If it is impossible to find such an interval family of arcs that do not intersect another segment of $\lambda$ or $\beta$ as in Figure~\ref{fig:cancelmulti}, we may locally alter both segments to remove the issue. This will introduce a new pair of critical points, but the four points may now be pairwise resolved --- unless there is another intersecting segment to accommodate likewise, but this process will terminate in a finite number of steps. 
\end{proof}

We next consider how an isotopy between two links may be reflected as a sequence of book links connected by unions of annuli. This an extension of Proposition~2.8 of \cite{lafountain2017braid} to book links and follows immediately by applying to the proposition Theorem~\ref{thm:isotoping}, but we include a proof here for thoroughness.

\begin{lemma}\label{lem:annuli}
   Suppose that $\lambda_-$ and $\lambda_+$ are book link representatives of a link in an arbitrary 3-manifold. There is a finite collection of pairwise disjoint book links $\lambda_-=\lambda_0$, $\lambda_1$, \dots, $\lambda_m=\lambda_+$ such that for each $i=1,\dots m$, there is a disjoint union of annuli $A_i$ with $\partial_-A_i = \lambda_{i-1}$ and $\partial_+A_i = \lambda_i$. Furthermore, the braids $\lambda_1, \dots, \lambda_{m-1}$ can be chosen to be book links of the same type. 
\end{lemma}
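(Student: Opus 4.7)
The plan is to mirror Proposition~2.8 of~\cite{lafountain2017braid}, with Theorem~\ref{thm:isotoping} playing the role of the classical Alexander theorem. Since $\lambda_-$ and $\lambda_+$ represent the same link, there is an ambient isotopy $H : Y \times [0,1] \to Y$ with $H_0 = \mathrm{id}$ and $H_1(\lambda_-) = \lambda_+$. The trace of $\lambda_-$ under $H$ sweeps out a (possibly self-intersecting) union of annuli in $Y$, and the strategy is to slice this trace into short enough subintervals that each piece is embedded, then to upgrade the intermediate slices to book links of a common type using Theorem~\ref{thm:isotoping}.

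First, choose a partition $0 = t_0 < t_1 < \dots < t_m = 1$ fine enough that, for every $i$, the piecewise trace $H(\lambda_- \times [t_{i-1}, t_i])$ is an embedded disjoint union of annuli $A_i$ cobounding $H_{t_{i-1}}(\lambda_-)$ and $H_{t_i}(\lambda_-)$. Such a partition exists by a compactness argument applied to the smooth family $\{H_t(\lambda_-)\}_t$. Next, for each $1 \le i \le m-1$, apply Theorem~\ref{thm:isotoping} to isotope $H_{t_i}(\lambda_-)$ to a book link $\lambda_i$ with some pre-chosen common type, for example with a large even number of critical points on each component. Since this isotopy is supported in a small neighborhood of $H_{t_i}(\lambda_-)$, the adjacent pieces $A_i$ and $A_{i+1}$ can be enlarged by short isotopy bands so that they still cobound the updated links. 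Setting $\lambda_0 := \lambda_-$ and $\lambda_m := \lambda_+$ produces the desired sequence, and the common target type for the perturbations guarantees the ``furthermore'' clause.

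The main obstacle I anticipate is ensuring that the full collection $\{\lambda_i\}$ is pairwise disjoint, as the statement demands. Consecutive pairs lie close together (they cobound thin annuli), and non-consecutive pairs could in principle intersect if the isotopy $H$ revisits portions of $Y$. Because two $1$-dimensional submanifolds of a $3$-manifold generically miss one another, a standard general-position argument allows small further perturbations of each $\lambda_i$ — inside the neighborhood where Theorem~\ref{thm:isotoping} was applied — that preserve both its book link type and the embedded annular cobordisms $A_i$. This is the same flexibility used in the classical braid case of Proposition~2.8 of~\cite{lafountain2017braid}, and the adaptation to book links is formal once Theorem~\ref{thm:isotoping} is available.
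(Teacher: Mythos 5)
Your overall strategy --- produce finitely many intermediate links cobounding embedded annuli, then invoke Theorem~\ref{thm:isotoping} to upgrade the intermediate links to book links of a common type --- matches the paper's, and the second half of your argument (applying Theorem~\ref{thm:isotoping} and extending to an ambient isotopy that carries the annuli along) is essentially what the paper does. However, the route you take to get the embedded annuli is different and contains a genuine gap. The paper works in the PL category: it replaces $\lambda_\pm$ by PL links in the $1$-skeleton of a triangulation, relates them by a finite sequence of $\Delta$-moves, and observes that each $\Delta$-move sweeps out an embedded triangle that widens to an embedded annulus; disjointness is arranged by subdividing so that successive triangles do not meet.

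The gap in your version is the claim that a sufficiently fine partition $0=t_0<\dots<t_m=1$ makes each trace $H(\lambda_-\times[t_{i-1},t_i])$ an embedded union of annuli. The map $(x,t)\mapsto H_t(x)$ fails to be an immersion at any point where the velocity $\partial_t H_t(x)$ is tangent to the moving link $H_t(\lambda_-)$, and such tangencies occur for generic isotopies (and persist under reparameterization of time); an isotopy that slides an arc of the link along itself is the basic example. No compactness argument or refinement of the time partition removes these points, since they are a property of the isotopy, not of the subdivision. To repair this you would have to first put $H$ in a special position (e.g.\ decompose it into finitely many pushes across embedded disks), which is exactly what the $\Delta$-move decomposition accomplishes and why the paper passes through the PL category rather than slicing the trace of a smooth ambient isotopy. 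Your closing general-position remarks about pairwise disjointness of the $\lambda_i$ are reasonable and consistent with the paper's handling of that point.
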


\begin{proof}
Observe that there exist piecewise linear links $L_-$ and $L_+$ such that $\lambda_-$ and $\lambda_+$ cobound families of embedded annuli with $L_-$ and $L_+$ respectively.
Pick a triangulation of $Y$ so that $L_-$ and $L_+$ lie in the 1-skeleton. After a finite number of subdivisions, the $L_-$ and $L_+$ are related by a sequence of $\Delta$-moves. Here, a $\Delta$-move corresponds to taking an embedded triangle with one edge on $L$ and replacing the arc on $L$ with the two remaining edges of the triangle. We can subdivide the triangulation to ensure that no successive pairs of triangles intersect. Notice that each $\Delta$-move traces a region that can be widened to an embedded annulus. 

The sequence of $\Delta$-moves can be used to build a family of $n$-tuples of oriented embedded annuli $\{A_i\}_{i=1}^m$ overlapping at their boundaries in links denoted by $\lambda_i$, and, in fact, they do not intersect in their interiors, i.e., $\partial_-A_i = \lambda_{i-1}$ and $\partial_+A_i=\lambda_i$, as well as $A_i \cap A_{i+1} = \lambda_i$. Theorem~\ref{thm:isotoping} guarantees the existence of an isotopy that converts each intermediate $\lambda_i$ into a book link with the appropriate number of critical points. This isotopy can be extended to an ambient isotopy, and the images of the unions of annuli under this isotopy are the desired $A_i$.
\end{proof}

\begin{figure}\centering
    \begin{subfigure}{0.24\textwidth}\centering\includegraphics[scale=0.33]{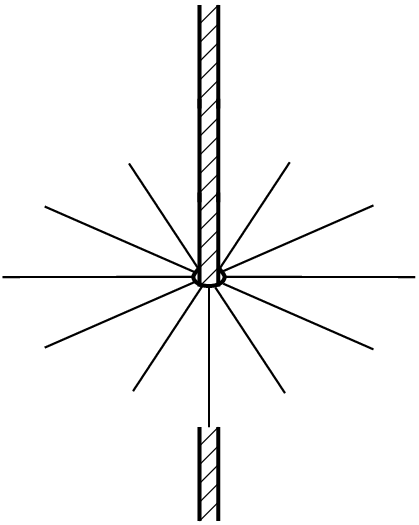}\caption{}\label{fig:ellipticpoint}\end{subfigure}
    \begin{subfigure}{0.24\textwidth}\centering\includegraphics[scale=0.33]{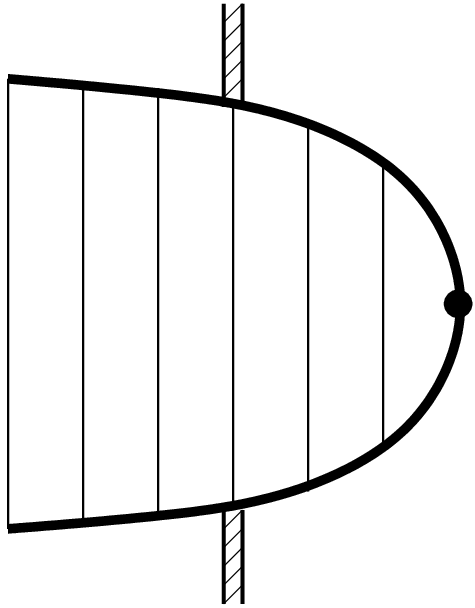}\caption{}\label{fig:boundarysingularity1}\end{subfigure}
    \begin{subfigure}{0.24\textwidth}\centering\includegraphics[scale=0.33]{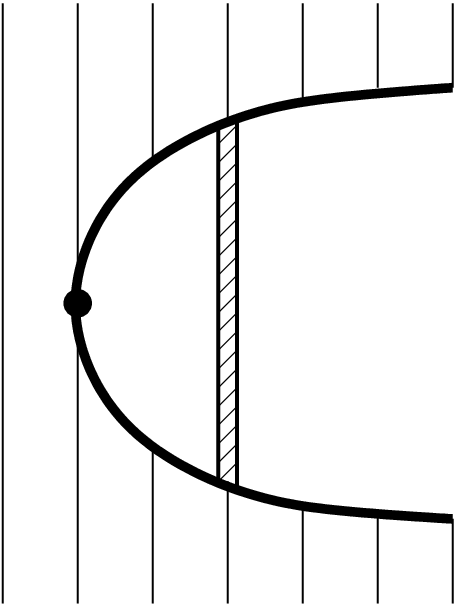}\caption{}\label{fig:boundarysingularity2}\end{subfigure}
    \begin{subfigure}{0.24\textwidth}\centering\includegraphics[scale=0.33]{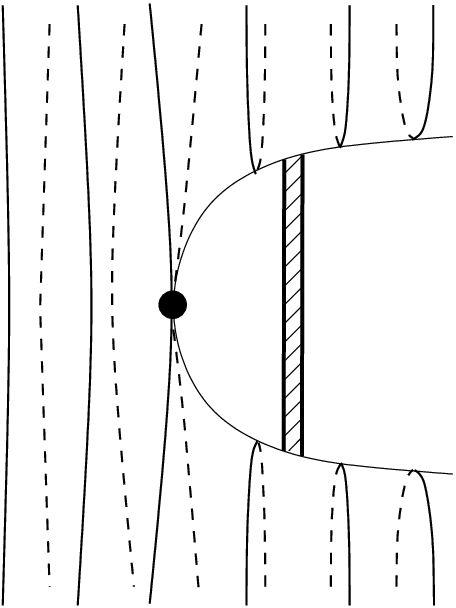}\caption{}\label{fig:saddlesingularity}\end{subfigure}
    \caption{\eqsubref{fig:ellipticpoint} An elliptic point where the binding meets the surface, (\subref{fig:boundarysingularity1}- \subref{fig:boundarysingularity2}) extremal singularities on the boundary, and \eqsubref{fig:saddlesingularity} a hyperbolic singularity in the interior of an open book foliation.} \label{fig:singularity3D}
\end{figure} 

\section{Foliations, Singularities, and Tiles}\label{section:foliations}

To understand surfaces bounded by book links, as well as to prepare for the proof of Theorem~\ref{thm:Markov}, we study foliations induced by the pages of the open book decomposition of the manifold. In particular, we investigate the resulting singularities and their foliated neighborhoods. 

We first expand Ito and Kawamuro's definition of an open book foliation~\cite{ito2014open} to accommodate surfaces bounded by book links and allow extremal singularities on the boundary only.

\begin{definition}
    Consider a surface $\Sigma$ in a 3-manifold $Y$ equipped with an open book decomposition $(B,p)$. Say $\partial\Sigma = \lambda$ is a book link and $\mathcal{F}(\Sigma)$ is the foliation on $\Sigma$ induced by the open book decomposition. We call $\mathcal{F}(\Sigma)$ an \emph{open book foliation} if the following conditions hold: $\Sigma$ is pierced by the binding in finitely many points, at each of which the foliation is radial; all but finitely many pages are transverse to $\Sigma$, and each exceptional page is tangent at a single point; and all tangencies are either extremal on $\partial \Sigma$ or of saddle type in $\text{int}\Sigma$. 
\end{definition}

We call a component of the intersection of the surface with a single page a \emph{leaf}, and it is either \emph{singular} or \emph{regular}. An \emph{elliptic point} is an intersection point of the surface and the binding, as in Figure~\ref{fig:ellipticpoint}. An \emph{extremal singularity} is a point where the boundary is tangent to a page (equivalently, a local maximum or minimum with respect to the Morse function), as in Figure~\ref{fig:boundarysingularity1} and~\ref{fig:boundarysingularity2}. A \emph{hyperbolic singularity} is a saddle point on the surface (equivalently, an index 1 critical point with respect to the Morse function), as in Figure~\ref{fig:saddlesingularity}. Note that elliptic and hyperbolic singularities may be given signs based on whether the orientation of the surface agrees with the orientation of the binding or the pages, respectively.

An open book foliation can have three types of leaf: arcs with both endpoints on $B$; arcs with an endpoint on $\partial\Sigma$ and an endpoint on $B$; arcs with both endpoints on $\partial\Sigma$.

\begin{lemma}\label{lem:generalposition}
    A surface with boundary a book link may be isotoped without changing the boundary to admit an open book foliation.
\end{lemma}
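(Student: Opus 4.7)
The plan is to establish the three conditions of the open book foliation definition through successive small isotopies of $\Sigma$ rel $\partial\Sigma$, finishing with the elimination of interior extrema of $p|_\Sigma$.

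First, I would arrange $\Sigma$ to meet $B$ transversely. Since $\dim\Sigma + \dim B = \dim Y$, a small ambient perturbation supported away from $\partial\Sigma$ (which is already disjoint from $B$) puts $\Sigma$ in transverse position with $B$, giving a finite set $\Sigma \cap B$ of intersection points. By a further local perturbation I would assume that in a tubular neighborhood of each intersection point $q$, the surface $\Sigma$ is a flat disk meeting $B$ orthogonally. In cylindrical coordinates around $B$ the angular function $p$ then restricts to this disk as a radial foliation, producing an elliptic point as in Figure~\ref{fig:ellipticpoint}.

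Next, I would apply a generic perturbation of $\Sigma$, supported away from $\partial\Sigma$ and the elliptic disks fixed above, to make $p|_\Sigma$ a Morse function on $\Sigma \setminus (\Sigma \cap B)$. This follows from the standard fact that Morse functions form a residual subset in the $C^\infty$ topology, combined with the observation that $p$ is a submersion on $Y \setminus B$. Because $\lambda = \partial\Sigma$ is by hypothesis a book link, $p|_\lambda$ is already Morse and its critical points are precisely the extremal singularities on the boundary, so no perturbation is needed there.

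The main obstacle is the final step: eliminating interior local maxima and minima of $p|_\Sigma$. Given an interior local maximum $q$ with $p(q) = \theta_0$, I would construct an embedded arc $\gamma \subset Y$ from $q$ to a point of $B$, chosen so that $p|_\gamma$ is strictly increasing and so that $\gamma$ meets $\Sigma$ only at the initial endpoint $q$. Such an arc exists because the pages foliate a punctured neighborhood of $B$ with $p$ monotone along any transverse path, and a generic small perturbation of any initial candidate removes spurious intersections with $\Sigma$. A finger-move isotopy of $\Sigma$ supported in a tubular neighborhood of $\gamma$ then pushes the local maximum through the binding, replacing it with a new transverse intersection of $\Sigma$ with $B$, i.e., an elliptic point; local minima are handled symmetrically with a decreasing arc. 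Since each move strictly decreases the finite number of interior extrema of $p|_\Sigma$ and introduces no new interior critical points of $p|_\Sigma$, the procedure terminates. At that point every interior critical point of $p|_\Sigma$ is a saddle, every boundary critical point of $p|_\lambda$ is extremal, and the surface supports the required open book foliation $\mathcal{F}(\Sigma)$.
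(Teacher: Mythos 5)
Your first two steps (transversality with $B$ plus a radial local model at each intersection point, and a generic perturbation making $p|_\Sigma$ Morse away from $B$ with distinct critical values) are fine and constitute the standard general-position setup. The paper disposes of this, together with the real content of the lemma, by citing \cite[Theorem~2.5]{ito2014open} and observing that the removal of circle leaves and interior extremal tangencies there only modifies the interior of the surface, hence applies verbatim when the boundary is a book link rather than a braid.

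The gap is in your final step. A generic isotopy of $\Sigma$ in $Y$ changes $\Sigma\cap B$ only by creating or cancelling \emph{pairs} of transverse intersection points, so a finger move cannot ``replace'' a local maximum by a single new elliptic point. More structurally: an innermost circle leaf around your interior maximum bounds a subdisk $\delta\subset\Sigma$ whose boundary lies in a single page, so $p_*[\partial\delta]=0$ in $H_1(S^1)$ and $\delta$ must meet $B$ with algebraic intersection number zero; an embedded disk meeting $B$ transversely in one point with boundary contained in a page does not exist. If instead the finger creates a cancelling pair of elliptic points, a Poincar\'e--Hopf and leaf-structure count on $\delta$ (the radial leaves emanating from the new elliptic points must be separated from the circle leaves near $\partial\delta$ by a singularity) shows the move necessarily introduces at least one new hyperbolic singularity, and the tube swept out along $\gamma$ is foliated by circle leaves contained in pages. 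So your claims that the move ``introduces no new interior critical points'' and strictly decreases the number of offending tangencies both fail, and with them the termination argument. The mechanism the paper relies on is different: an innermost circle leaf bounds a disk in a page and a disk in $\Sigma$ containing only the extremum; these cobound a ball, and isotoping $\Sigma$ across that ball deletes the extremum together with all of its surrounding circle leaves without creating new singularities. You would need either to reproduce that innermost-disk argument or to cite it, as the paper does.
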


\begin{proof}
    We perturb the surface in such a manner that no two hyperbolic singularities occur on the same page. We may further assume that saddle points occur in the interior, or else we may move them into the interior by a small perturbation. 

    The work in \cite[Theorem~2.5]{ito2014open} allows the removal of simple closed curves and interior extremal critical points in the case of a surface bounded by a braid. The same arguments apply in our setting because they only require a modification of the interior of the surface and not its boundary.
\end{proof}

For the remainder of this paper, we fix an arbitrary 3-manifold, $Y$, equipped with an arbitrary open book decomposition $(B,p)$. We assume that any surface $\Sigma$ we refer to is embedded in $Y$ and equipped with the open book foliation induced by $(B,p)$.

Next, we examine how singularities of an open book foliation lie with respect to one another in the surface and decompose the surface into a set of tiles following the work of \cite[p.~30]{birman1992studying1} for braids in $S^3$ and \cite[\S2.1.3]{ito2014open} for braids in an arbitrary $Y^3$; see also the account in~\cite[Lemma~2.4]{lafountain2017braid}. Recall each arc contains at most one hyperbolic or extremal singularity because an arc comes from an intersection of a page with the surface, and we have assumed that these singularities occur on discrete pages. The singular leaves are separated from one another by product foliations, in other words, the surface can be divided along non-singular leaves into a finite set of regions which are neighborhoods of singular leaves. We will commonly visualize these regions by flattening the surface and drawing lines to represent the foliation, which we show in Figures~\ref{fig:tiles}. 

\begin{figure}\centering
    \begin{subfigure}{0.19\textwidth}\centering
        \includegraphics[scale=0.25]{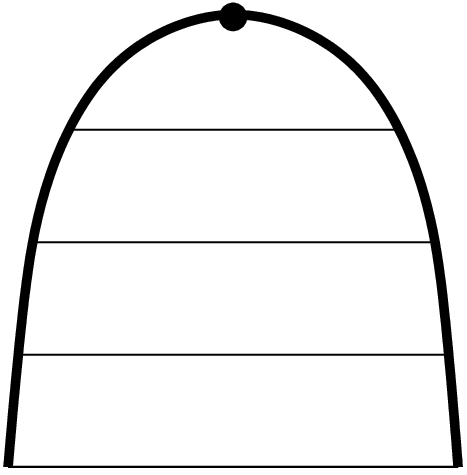}\caption{}\label{fig:boundarysingularity1flat}
        \end{subfigure}
    \begin{subfigure}{0.19\textwidth}\centering
        \includegraphics[scale=0.25]{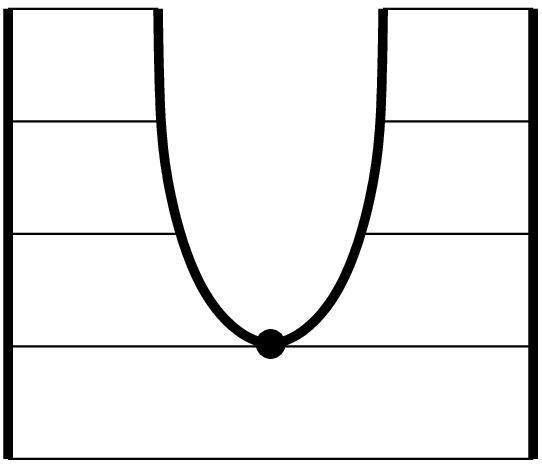}\caption{}\label{fig:boundarysingularity2a}
    \end{subfigure}
    \begin{subfigure}{0.19\textwidth}\centering
        \includegraphics[scale=0.25]{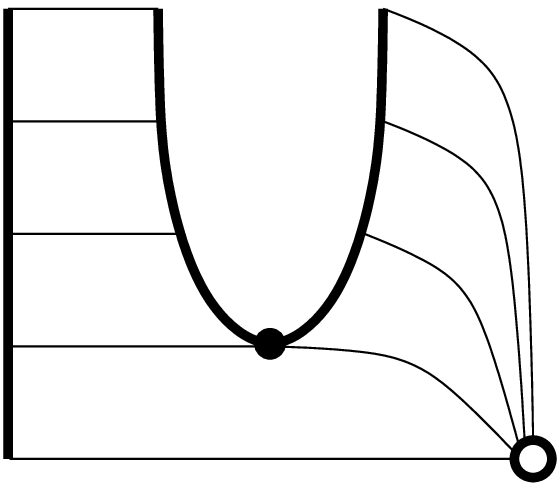}\caption{}\label{fig:boundarysingularity2b}
    \end{subfigure}
    \begin{subfigure}{0.19\textwidth}\centering
        \includegraphics[scale=0.25]{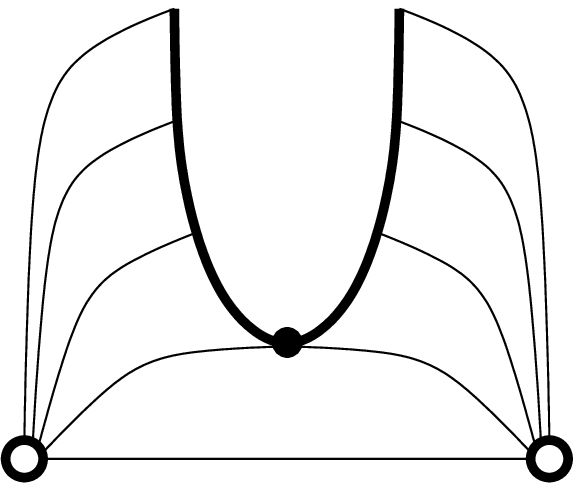}\caption{}\label{fig:boundarysingularity2c}
    \end{subfigure}
    \begin{subfigure}{0.19\textwidth}\centering
        \includegraphics[scale=0.25]{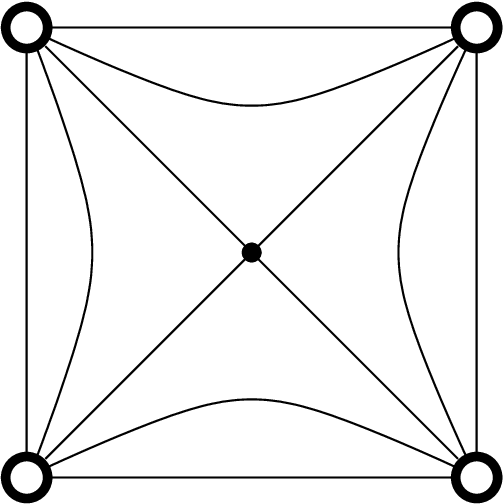}\caption{}\label{fig:interiorhyperbolics4}
    \end{subfigure}
    \par\medskip
    \begin{subfigure}{0.19\textwidth}\centering
        \includegraphics[scale=0.25]{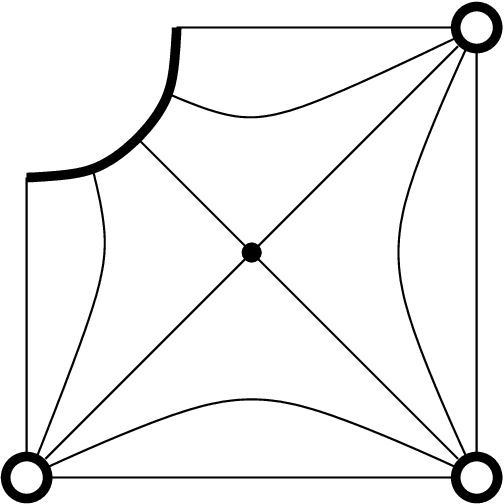}\caption{}\label{fig:interiorhyperbolics3}
    \end{subfigure}
    \begin{subfigure}{0.19\textwidth}\centering
        \includegraphics[scale=0.25]{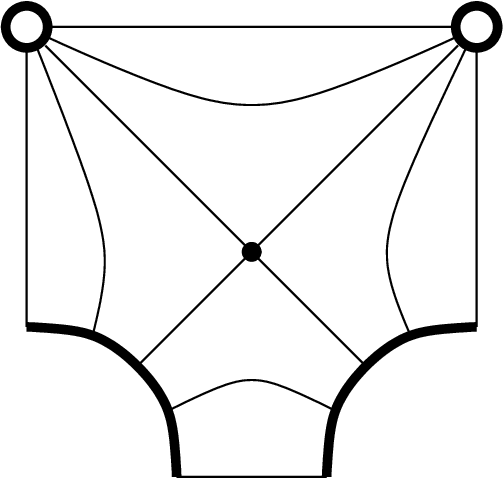}\caption{}\label{fig:interiorhyperbolics2}
    \end{subfigure}
    \begin{subfigure}{0.19\textwidth}\centering
        \includegraphics[scale=0.25]{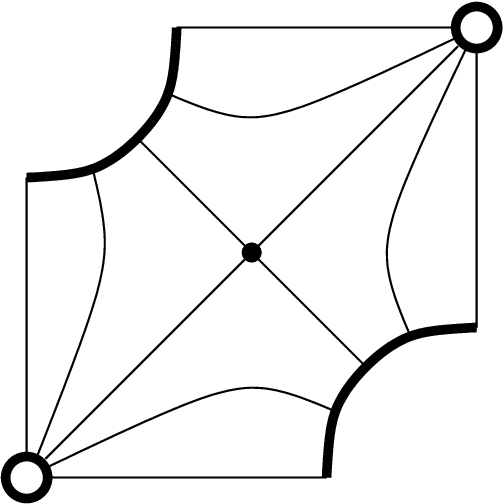}\caption{}\label{fig:interiorhyperbolics2var}
    \end{subfigure}
    \begin{subfigure}{0.19\textwidth}\centering
        \includegraphics[scale=0.25]{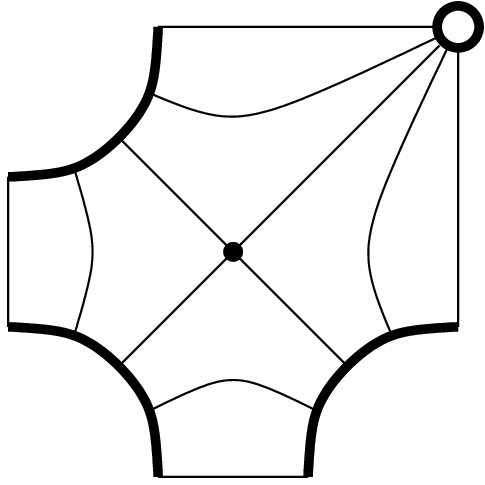}\caption{}\label{fig:interiorhyperbolics1}
    \end{subfigure}
    \begin{subfigure}{0.19\textwidth}\centering
        \includegraphics[scale=0.25]{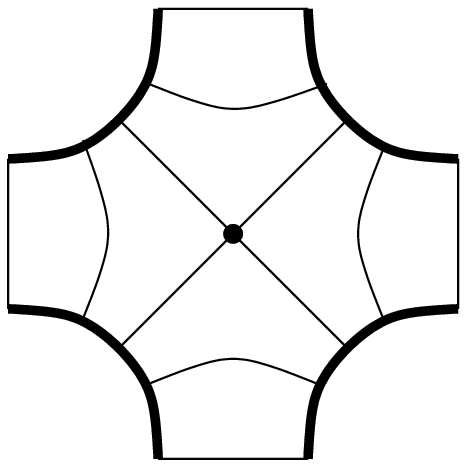}\caption{}\label{fig:interiorhyperbolics0}
    \end{subfigure}
    \caption{Tiles for boundary extremal and hyperbolic interior singularities (solid dots), which are connected by singular leaves to either boundary segments (dark lines) or elliptic points (open dots). Lemma~\ref{lem:onlytype0} explains how to eliminate tiles \eqsubref{fig:boundarysingularity2a}-\eqsubref{fig:boundarysingularity2c} by local perturbations and tiles \eqsubref{fig:interiorhyperbolics3}, \eqsubref{fig:interiorhyperbolics2}, \eqsubref{fig:interiorhyperbolics1} by stabilizations. Lemma~\ref{lem:rotate} classifies the surfaces tiled by only the remaining tiles: a surface without boundary by \eqsubref{fig:interiorhyperbolics4}; a surface bounded by a braid by \eqsubref{fig:interiorhyperbolics2var}; a surface bounded by a non-braid book link by \eqsubref{fig:interiorhyperbolics0} and \eqsubref{fig:boundarysingularity1flat} together (the $h$- and $\partial$-tiles).}\label{fig:tiles}
\end{figure}

\begin{definition}
    A \emph{tile} is the neighborhood of a singular leaf. The tile in Figure~\ref{fig:boundarysingularity1flat} is called a \emph{$\partial$-tile} and the tile in Figure~\ref{fig:interiorhyperbolics0} is called an \emph{$h$-tile}.
\end{definition}

\begin{lemma}\label{lem:onlytype0}
    A surface with an open book foliation is (possibly after book link isotopies and stabilizations or destabilizations) either closed; a disk; a surface with braided boundary; or a surface with book link boundary which may be decomposed along non-singular leaves into a finite set of $\partial$-tiles and $h$-tiles.
\end{lemma}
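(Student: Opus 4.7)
The plan is to enumerate local pictures near singular leaves---the ``tiles''---and then systematically remove every tile type other than the $\partial$-tile and the $h$-tile using book link isotopy, stabilization, or destabilization. The closed, disk, and braided-boundary cases will correspond precisely to the situations where no such removal is possible or necessary.

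First I would verify that Figure~\ref{fig:tiles} exhausts the list of tile types. By definition of an open book foliation, every singular leaf passes through either an extremal boundary singularity or an interior hyperbolic saddle, and each separatrix of a saddle must terminate at an elliptic point on $B$ or at a point of $\partial\Sigma$. A combinatorial case analysis on the number of each type of endpoint and the cyclic order of separatrices around the singularity yields exactly the ten local pictures shown.

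Next I would dispose of the three extra boundary tiles \eqsubref{fig:boundarysingularity2a}--\eqsubref{fig:boundarysingularity2c}. Each carries an additional elliptic point or boundary arc adjacent to the extremum, and a disk-supported local isotopy of $\Sigma$---a finger move on the extremum---pushes it across the extra structure, leaving behind a standard $\partial$-tile while keeping $\partial\Sigma$ a book link of the same type. For the interior tiles \eqsubref{fig:interiorhyperbolics3}, \eqsubref{fig:interiorhyperbolics2}, and \eqsubref{fig:interiorhyperbolics1}, where fewer than four separatrices terminate at elliptic points, I would stabilize the book link along each separatrix that ends on $\partial\Sigma$: the stabilization introduces a canceling pair of critical points on $\lambda$ and creates a new elliptic point at the tip of the separatrix, converting that separatrix endpoint into an elliptic endpoint. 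Enough such stabilizations turn each such tile into an $h$-tile.

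Finally I would identify the exceptional outcomes: a surface tiled entirely by \eqsubref{fig:interiorhyperbolics4} has empty boundary and is closed; a surface tiled entirely by \eqsubref{fig:interiorhyperbolics2var} has braided boundary; and the disk case arises when $\Sigma$ is a neighborhood of a single elliptic point with no hyperbolic singularities. Every other surface decomposes along nonsingular leaves into $\partial$-tiles and $h$-tiles. The main obstacle is termination and locality: I need each modification to be strictly local so that it does not worsen a neighboring tile, and I need a monovariant that decreases under each step. Supporting each perturbation or stabilization in a small disk neighborhood of the offending tile and bookkeeping by the total count of non-$\partial$, non-$h$ tiles should suffice, since every modification strictly reduces this count while leaving tiles outside its support unchanged.
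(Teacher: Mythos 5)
Your overall strategy --- enumerate the tiles, then remove the bad ones by local moves while tracking termination --- is the same as the paper's, and your identification of the closed, disk, and braided cases matches. But your treatment of the bad hyperbolic tiles \eqsubref{fig:interiorhyperbolics3}, \eqsubref{fig:interiorhyperbolics2}, \eqsubref{fig:interiorhyperbolics1} goes in the wrong direction. The $h$-tile \eqsubref{fig:interiorhyperbolics0} is the tile whose four separatrices all end on $\partial\Sigma$, with \emph{no} elliptic endpoints; the tile with four elliptic endpoints is \eqsubref{fig:interiorhyperbolics4}, which belongs to the closed-surface case. Your proposed stabilization converts separatrix endpoints on $\partial\Sigma$ into elliptic endpoints, so it pushes each bad tile toward \eqsubref{fig:interiorhyperbolics4}, not toward the $h$-tile, and the process would never produce the decomposition into $\partial$- and $h$-tiles that the lemma asserts. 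The correct move (Figure~\ref{fig:singularitypair}, following LaFountain--Menasco's Figure~2.22) is the opposite: a stabilization along a singular leaf running from the boundary to an elliptic point, which cancels that elliptic point against the adjacent hyperbolic singularity and thereby \emph{decreases} the number of elliptic endpoints in the neighboring tiles.

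Two smaller issues feed into your termination argument. First, the fix for the boundary tiles \eqsubref{fig:boundarysingularity2a}--\eqsubref{fig:boundarysingularity2c} cannot simply ``leave behind a standard $\partial$-tile'': the offending extremal singularity's leaf is attached to extra structure, and disconnecting it generically requires creating a new hyperbolic singularity (this is exactly what Figure~\ref{fig:boundarysingularitychange} does). Second, your monovariant --- the count of non-$\partial$, non-$h$ tiles --- is therefore not monotone: the boundary fix can create a new bad hyperbolic tile, and the elliptic-cancelling stabilization changes the type of every tile sharing the removed elliptic point, possibly converting good tiles into bad ones. Termination instead follows from the fact that the number of bad boundary tiles and then the total number of elliptic points each strictly decrease in the two phases of the argument.
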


\begin{proof}
    The work in \cite[Lemma~2.4]{lafountain2017braid} ensures that the surface has a finite decomposition into tiles; however, the introduction of extremal singularities along the boundary allows additional tile types beyond those demonstrated by braids. 
    
    As in the case that the boundary is braided, we may assume each elliptic point is connected by an arc to at least one hyperbolic singularity, else there is a disk or sphere component. We may also assume that each hyperbolic singularity falls on a singular leaf whose four endpoints fall either at elliptic points or on the boundary since we may perturb the surface so that no two critical points occur in the same page. Therefore, the singularities of Figure~\ref{fig:singularity3D} give rise to two possible extremal tiles and six possible hyperbolic tiles, whether the singular leaves end at elliptic points or on the boundary. See Figure~\ref{fig:tiles}. Note that there are strictly more possibilities here than in the case the boundary of the surface is assumed to be braided.
    
    Figure~\ref{fig:singularitytransforms} shows us how to remove some of these tiles. We may first perform a local perturbation on the interior of the surface to convert one type of boundary singularity into the other by adding a hyperbolic singularity, and so we may remove any instance of \eqref{fig:boundarysingularity2a}, \eqref{fig:boundarysingularity2b}, or \eqref{fig:boundarysingularity2c}, i.e., any extremal singularity lies in a $\partial$-tile. We may also stabilize the book link to remove a hyperbolic singularity and an elliptic point together \`a la \cite[Figure~2.22]{lafountain2017braid}. Thus, we may assume there are no tiles \eqref{fig:interiorhyperbolics3}, \eqref{fig:interiorhyperbolics2}, and \eqref{fig:interiorhyperbolics1}. 
    
    Once we have removed these tiles, if there is any tile remaining with $4$ elliptic points as in \eqref{fig:interiorhyperbolics4}, then the entire surface consists of only these tiles and so is closed. If there is a tile \eqref{fig:interiorhyperbolics2var}, then, again, the entire surface must be tiled with the same tile, so the boundary will have no critical points and will be braided. Finally, $h$-tiles from \eqref{fig:interiorhyperbolics0} can be glued to one another and to $\partial$-tiles, giving a component of the surface with book link boundary.
\end{proof}

\begin{figure}\centering
    \begin{subfigure}{0.45\textwidth}\centering\includegraphics[scale=0.33]{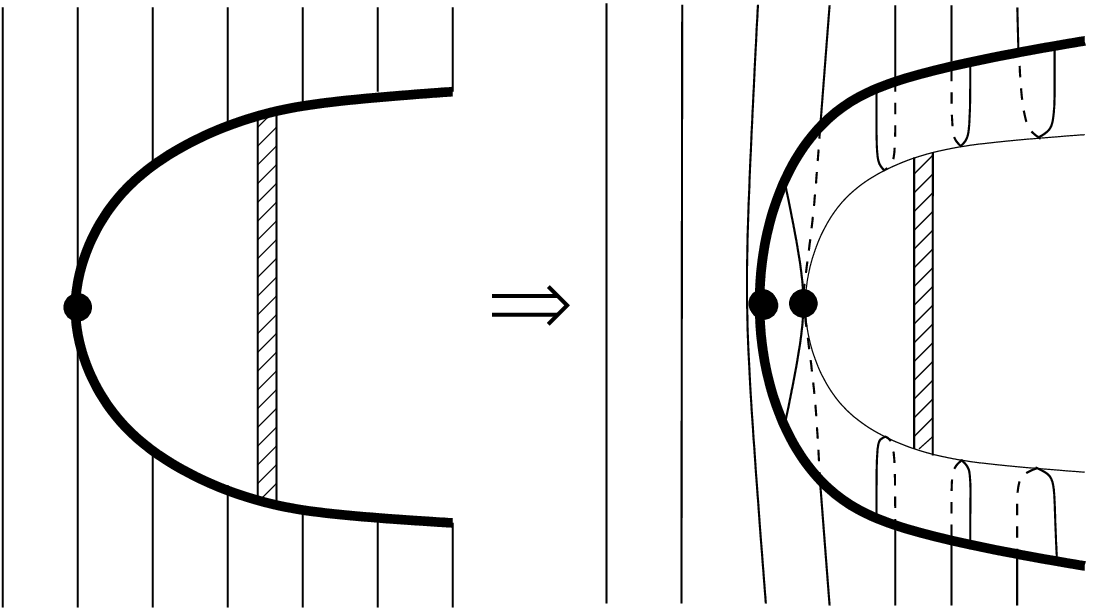}\caption{}\label{fig:boundarysingularitychange}\end{subfigure}
    \begin{subfigure}{0.45\textwidth}\centering\includegraphics[scale=0.33]{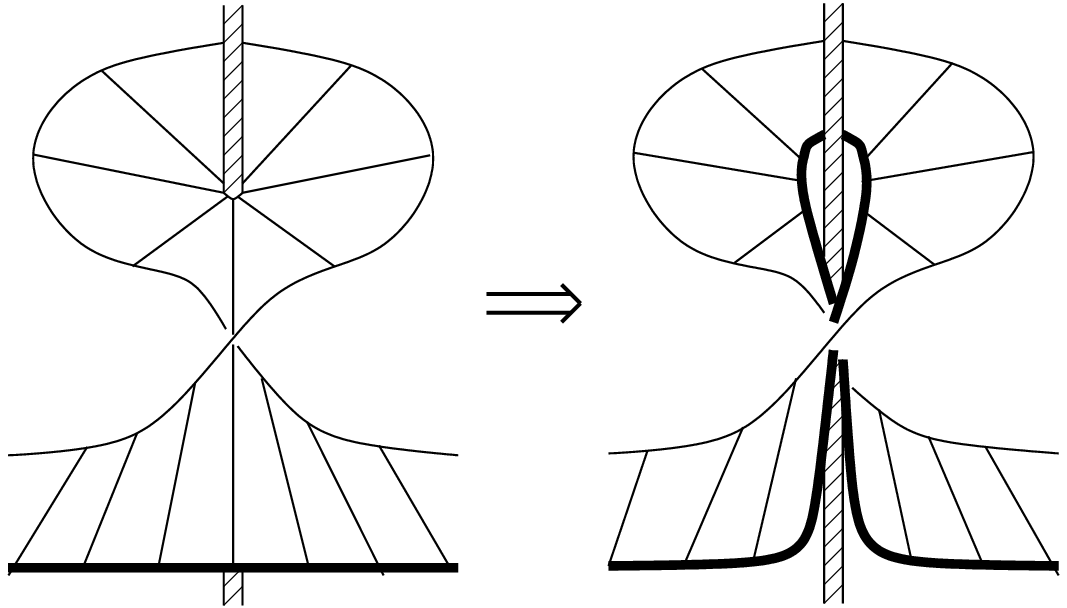}\caption{}\label{fig:singularitypair}\end{subfigure}
    \caption{\eqsubref{fig:boundarysingularitychange} A local perturbation eliminating one type of extremal singularity. \eqsubref{fig:singularitypair} A stabilization eliminating an elliptic point and a hyperbolic singularity.}\label{fig:singularitytransforms}
\end{figure}

We introduce one more tool to visualize the surface, in particular, to study how the singularities are connected to one another and the boundary. Represent a surface $\Sigma$ by a graph $\Gamma(\Sigma)$: assign a vertex for each singularity and connect two vertices with an edge if their tiles are adjacent. In the special case where $\Sigma$ has only $\partial$- and $h$-tiles, there will be a vertex of degree $4$ at each hyperbolic singularity and a vertex of degree $1$ at each extremal singularity. Additionally, if $\Sigma$ is an annulus with two boundary components, then the graph has a single simple cycle that divides the surface into two regions. If each boundary component is a book link of bridge index $d$, then each region has exactly $2d$ degree-1 vertices.

Finally, we will find it useful to perturb the surface locally around two hyperbolic singularities and examine how this perturbation is reflected in the transformation of the tiles and their graph. Similar techniques have previously been employed to study open book foliations, e.g., in~\cite[Figure~8]{birman1990studying} and~\cite[Theorem~3.1]{ito2014operations}.

\begin{lemma}\label{lem:rotate}
    Let $\Sigma$ be a surface with an open book foliation and two adjacent hyperbolic singularities. If the two hyperbolic singularities have the same sign, or if one is adjacent to a $\partial$-singularity, then they may be rearranged via an isotopy of $\Sigma$ as in Figure~\ref{fig:rotatetile} and its graph $\Gamma(\Sigma)$ altered as in Figure~\ref{fig:rotategraph}.
\end{lemma}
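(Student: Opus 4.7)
The plan is to realize the rearrangement as an explicit ambient isotopy of $\Sigma$ supported in a tubular neighborhood of the union of the two tiles, analogous to the ``change of foliation'' moves of Birman--Menasco \cite{birman1990studying} and the exchange moves in Ito--Kawamuro \cite{ito2014operations}. Let $h_1,h_2$ be the two adjacent hyperbolic singularities, occurring on pages $D_{\theta_1}$ and $D_{\theta_2}$ with $\theta_1<\theta_2$ and no other singular pages between them (this can be arranged by Lemma~\ref{lem:generalposition} after a preliminary perturbation). Let $N\subset\Sigma$ be a closed neighborhood consisting of the two tiles together with a collar of the non-singular leaf they share; topologically $N$ is a disk and it inherits a Morse function from the restriction of $p$.

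First I would put $N$ into a local normal form. Because the open book is fibered away from $B$ and $N$ avoids $B$ in its interior, one can identify a neighborhood $U\subset Y$ of $N$ with a standard model in which the projection to $\theta$ agrees with $p$ and the restriction $p|_N$ is a Morse function with exactly two critical points (the two saddles, or one saddle and one extremum). Crucially, the co-orientations induced on $N$ at $h_1$ and $h_2$ are compatible exactly when the two singularities have the same sign: both positive or both negative saddles locally look like the same quadratic form with respect to $\theta$, so the gradient-like vector field can be chosen consistently across $N$. In the second case, one of the singularities neighbours a $\partial$-singularity, so the local model can be reduced to a neighborhood of a boundary arc, where an extremum of $p|_{\partial\Sigma}$ can be slid past the adjacent saddle without any sign obstruction.

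Next I would apply the standard Morse-theoretic exchange on $N$: in either case the gradient-like flow on $N$ admits a one-parameter family of perturbations that interchange the $\theta$-values of the two critical points, with no cancellation and no new critical points introduced. This perturbation extends to an ambient isotopy of $U$ fixed on $\partial U$, hence to an ambient isotopy of $Y$ fixed outside $U$ and, in particular, fixed on $\partial\Sigma=\lambda$. After the isotopy the two hyperbolic singularities occur in the opposite $\theta$-order, and the shared non-singular leaf that connected them now joins a different pair of elliptic endpoints; this is exactly the local picture of Figure~\ref{fig:rotatetile}, and the corresponding change to the adjacency graph is Figure~\ref{fig:rotategraph}.

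The main obstacle is verifying that the exchange can be realized as an \emph{ambient} isotopy (and not merely an isotopy of $N$) without creating new singularities or affecting the rest of the foliation on $\Sigma$. This reduces to checking that $N$ admits a product neighborhood $U\cong N\times(-\epsilon,\epsilon)$ compatible with the fibration $p$; the same-sign hypothesis is precisely what makes the two saddles look like a single Morse family in a common co-oriented product chart, and the $\partial$-tile hypothesis is what makes one of the critical points a boundary extremum whose slide is trivially realizable. In both cases, outside $U$ the isotopy is the identity, so no other tiles are disturbed and the book link type of $\partial\Sigma=\lambda$ is preserved.
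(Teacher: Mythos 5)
Your first case is essentially the paper's argument: the same-sign hypothesis lets the two saddles merge into a single degenerate (monkey-saddle) tangency and re-resolve, realized by an ambient isotopy supported near the union of the two tiles, and it is exactly this degeneration that produces the reconnection of separatrices shown in Figures~\ref{fig:rotatetile} and~\ref{fig:rotategraph}. One caution, though: you describe the move as interchanging the $\theta$-values of the two critical points ``with no cancellation and no new critical points introduced,'' but a mere reordering of the singular pages would leave the adjacency graph $\Gamma(\Sigma)$ unchanged. The family of surfaces must pass through a moment where the foliation fails to be an open book foliation (a single degenerate critical point, as in Figure~\ref{fig:rotate3d}), and the same-sign hypothesis is what permits this degeneration; your ``common co-oriented product chart'' remark gestures at this but does not verify it.

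The genuine gap is in your second case. The hypothesis that one of the hyperbolic singularities is adjacent to a $\partial$-singularity is there to handle the situation where the two hyperbolic singularities have \emph{opposite} signs, in which case the monkey-saddle move of the first case is obstructed. The conclusion of the lemma is still that the two \emph{hyperbolic} singularities are rearranged as in Figure~\ref{fig:rotatetile}; it is not about sliding a boundary extremum past a saddle, and asserting that such a slide has ``no sign obstruction'' does not address the sign obstruction between the two saddles themselves. The missing idea is to use the adjacent $\partial$-tile to flip the sign of one hyperbolic singularity: run the local perturbation of Figure~\ref{fig:boundarysingularitychange} in reverse, absorbing that hyperbolic singularity and the extremal boundary singularity into a single boundary tangency of the other flavor, then re-apply the perturbation choosing the opposite sign for the newly created hyperbolic singularity. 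After this the two hyperbolic singularities have the same sign and the first case applies. Without some such sign-changing step your argument does not establish the second half of the lemma.
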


\begin{figure}
    \begin{subfigure}{0.95\textwidth}\centering
        \begin{subfigure}[c]{0.3\textwidth}\centering\includegraphics[scale=0.75]{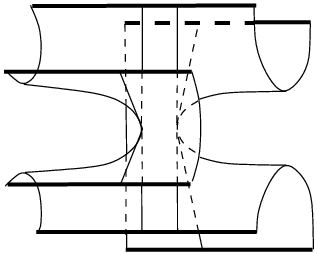}\end{subfigure}
        \begin{subfigure}[c]{0.35\textwidth}\centering\includegraphics[scale=0.75]{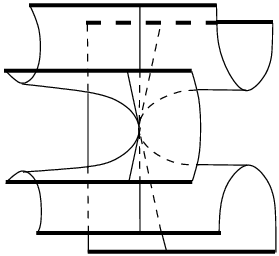}\end{subfigure}
        \begin{subfigure}[c]{0.3\textwidth}\centering\includegraphics[scale=0.75]{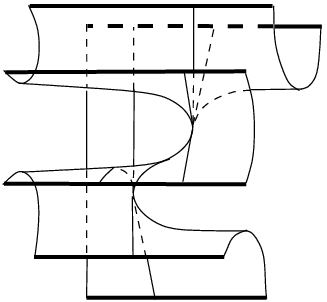}\end{subfigure}
        \caption{}\label{fig:rotate3d}
    \end{subfigure}
    \begin{subfigure}{0.95\textwidth}\centering
        \begin{subfigure}[c]{0.3\textwidth}\centering
            \input{rotate_before.tikz}
        \end{subfigure}
        \begin{subfigure}[c]{0.35\textwidth}\centering
            \input{rotate_mid.tikz}
        \end{subfigure}
        \begin{subfigure}[c]{0.3\textwidth}\centering
            \input{rotate_after.tikz}
        \end{subfigure}
        \caption{}\label{fig:rotatetile}
    \end{subfigure}
    \begin{subfigure}{0.95\textwidth}\centering
        \begin{subfigure}{0.3\textwidth}\centering
            \begin{tikzpicture}
                \node(H1) at (0,0) {$h_1$};
                \node(H2) at (1,0) {$h_2$} edge [->] (H1);
                \node(v1) at (-0.5,1) {$t_1$} edge [<-] (H1);
                \node(v2) at (0.5,1) {$t_2$} edge [->] (H2);
                \node(v3) at (1.5,1) {$t_3$} edge [<-] (H2);
                \node(v4) at (1.5,-1) {$t_4$} edge [->] (H2);
                \node(v5) at (0.5,-1) {$t_5$} edge [<-] (H1);
                \node(v6) at (-0.5,-1) {$t_6$} edge [->] (H1);
            \end{tikzpicture}
        \end{subfigure}
        \begin{subfigure}{0.35\textwidth}\centering
            \begin{tikzpicture}
                \node(H) at (0,0) {$h$};
                \node(v1) at (-0.8,1) {$t_1$} edge [<-] (H);
                \node(v2) at (0,1) {$t_2$} edge [->] (H);
                \node(v3) at (0.8,1) {$t_3$} edge [<-] (H);
                \node(v4) at (0.8,-1) {$t_4$} edge [->] (H);
                \node(v5) at (0,-1) {$t_5$} edge [<-] (H);
                \node(v6) at (-0.8,-1) {$t_6$} edge [->] (H);
            \end{tikzpicture}
        \end{subfigure}
        \begin{subfigure}{0.3\textwidth}\centering
            \begin{tikzpicture}
                \node(H1) at (0,0) {$h_1'$};
                \node(H2) at (1,0) {$h_2'$} edge  (H1);
                \node(v1) at (-0.5,1) {$t_1$} edge [<-] (H1);
                \node(v2) at (0.5,1) {$t_2$} edge [->] (H1);
                \node(v3) at (1.5,1) {$t_3$} edge [<-] (H2);
                \node(v4) at (1.5,-1) {$t_4$} edge [->] (H2);
                \node(v5) at (0.5,-1) {$t_5$} edge [<-] (H2);
                \node(v6) at (-0.5,-1) {$t_6$} edge [->] (H1);
            \end{tikzpicture}
        \end{subfigure}
        \caption{}\label{fig:rotategraph}
    \end{subfigure}
    \caption{Hyperbolic singularities with the same sign merging into a degenerate monkey saddle and then resolving: \eqsubref{fig:rotate3d} a foliated view, \eqsubref{fig:rotatetile} a tile view, and \eqsubref{fig:rotategraph} a graphical representation. The operation is shown with $h$-tiles for simplicity.}\label{fig:rotate}
\end{figure}

\begin{proof}
    If the two hyperbolic singularities have the same sign, i.e., they both agree (or both disagree) with the orientation of the pages, then Figure~\ref{fig:rotate3d} represents an ambient isotopy near these singularities which an isotopy through surfaces with fixed boundary and whose foliations are open book foliations except at the point where the surface has a single, degenerate critical point. Figures~\ref{fig:rotatetile} and~\ref{fig:rotategraph} represent an overhead, flattened view of the foliations and the graphs, respectively.

    If the two tiles have opposite signs but one is adjacent to a $\partial$-tile, recall the operation of Figure~\ref{fig:boundarysingularitychange}: the less desirable flavor of extremal boundary singularity may be resolved into a $\partial$-singularity and a hyperbolic singularity of either sign. Therefore, we may apply the move first in reverse and then forwards, selecting the opposite orientation, and then proceed with the isotopy in the figure.
\end{proof}

If desired, we may even ensure this operation does not change the number of hyperbolic singularities of each sign: after the operation, we may use a $\partial$-tile to reverse a sign.

\section{A Markov Theorem for Book Links}\label{section:markov}

Finally, we may apply the techniques of Section~\ref{section:foliations} to understand how to use collections of appropriate annuli book links to generate book link isotopies (after suitable stabilization).

\begin{figure}\centering
    \begin{subfigure}[c]{0.35\textwidth}\centering
        \begin{subfigure}{0.45\textwidth}
            \begin{center}\begin{tikzpicture}
                \node(H1) at (0,0) {$h$};
                \node(H2) at (1,0) {$h$} edge (H1);
                \node(t1) at (-0.5,1) {} edge[double] (H1);
                \node(t2) at (0.5,1) {} edge (H2);
                \node(t3) at (1.5,1) {} edge (H2);
                \node(t4) at (1.5,-1) {} edge (H2);
                \node(t5) at (0.5,-1) {} edge[double] (H1);
                \node(t6) at (-0.5,-1) {} edge (H1);
            \end{tikzpicture}\end{center}
        \end{subfigure}
        \begin{subfigure}{0.45\textwidth}
            \begin{tikzpicture}\centering
                \node(H1) at (0,0) {$h$};
                \node(H2) at (1,0) {$h$} edge[double] (H1);
                \node(t1) at (-0.5,1) {} edge[double] (H1);
                \node(t2) at (0.5,1) {} edge (H1);
                \node(t3) at (1.5,1) {} edge (H2);
                \node(t4) at (1.5,-1) {} edge (H2);
                \node(t5) at (0.5,-1) {} edge[double] (H2);
                \node(t6) at (-0.5,-1) {} edge (H1);
            \end{tikzpicture}
        \end{subfigure}
        \caption{}\label{fig:movehyperbolicgraph}
    \end{subfigure}
    \begin{subfigure}[c]{0.6\textwidth}\centering
        \begin{subfigure}[c]{0.65\textwidth}\centering
            \input{movehyperbolic_before.tikz}
        \end{subfigure}
        \begin{subfigure}[c]{0.3\textwidth}\centering
            \input{movehyperbolic_after.tikz}
        \end{subfigure}
        \caption{}\label{fig:movehyperbolictile}
    \end{subfigure}
    \begin{subfigure}{0.3\textwidth}\centering
        \begin{subfigure}{0.45\textwidth}\centering
            \begin{tikzpicture}
                \node(H1) at (0,0) {$h$};
                \node(H2) at (1,0) {$h$} edge [dashed] (H1);
                \node(v1) at (-0.5,1) {$h$} edge [dashed] (H1);
                \node(v2) at (0.5,1) {$\partial$} edge [dashed] (H2);
                \node(v3) at (1.5,1) {} edge (H2);
                \node(v4) at (1.5,-1) {} edge (H2);
                \node(v5) at (0.5,-1) {} edge (H1);
                \node(v6) at (-.5,-1) {} edge (H1);
            \end{tikzpicture}
        \end{subfigure}
        \begin{subfigure}{0.45\textwidth}\centering
            \begin{tikzpicture}
                \node(H1) at (0,0) {$h$};
                \node(H2) at (1,0) {$h$} edge (H1);
                \node(v1) at (-0.5,1) {$h$} edge [dashed] (H1);
                \node(v2) at (0.5,1) {$\partial$} edge [dashed] (H1);
                \node(v3) at (1.5,1) {} edge (H2);
                \node(v4) at (1.5,-1) {} edge (H2);
                \node(v5) at (0.5,-1) {} edge (H2);
                \node(v6) at (-0.5,-1) {} edge (H1);
            \end{tikzpicture}
        \end{subfigure}
        \caption{}\label{fig:movepartialgraph}
    \end{subfigure}
    \begin{subfigure}{0.34\textwidth}\centering
        \begin{subfigure}{0.45\textwidth}\centering
            \begin{tikzpicture}
                \node(H1) at (0,0) {$h$};
                \node(H2) at (1,0) {$h$} edge[double] (H1);
                \node(t1) at (-0.5,1) {$h$} edge[double] (H1);
                \node(t2) at (0.5,1) {$\partial$} edge (H2);
                \node(t3) at (1.5,1) {$\partial$} edge (H2);
                \node(t4) at (1.5,-1) {$h$} edge[double] (H2);
                \node(t5) at (0.5,-1) {$\partial$} edge (H1);
                \node(t6) at (-0.5,-1) {$\partial$} edge (H1);
            \end{tikzpicture}
        \end{subfigure}
        \begin{subfigure}{0.45\textwidth}\centering
            \begin{tikzpicture}\centering
                \node(H1) at (0,0) {$h$};
                \node(H2) at (1,0) {$h$} edge[double] (H1);
                \node(t1) at (-0.5,1) {$h$} edge[double] (H1);
                \node(t2) at (0.5,1) {$\partial$} edge (H1);
                \node(t3) at (1.5,1) {$\partial$} edge (H2);
                \node(t4) at (1.5,-1) {$h$} edge[double] (H2);
                \node(t5) at (0.5,-1) {$\partial$} edge (H2);
                \node(t6) at (-0.5,-1) {$\partial$} edge (H1);
                \end{tikzpicture}
            \end{subfigure}
            \caption{}\label{fig:swapboundarygraph}
        \end{subfigure}
        \begin{subfigure}{0.34\textwidth}\centering
            \begin{subfigure}{0.45\textwidth}\centering
                \begin{tikzpicture}
                \node(H1) at (0,0) {$h$};
                \node(H2) at (1,0) {$h$} edge[double] (H1);
                \node(t1) at (-0.5,1) {$h$} edge[double] (H1);
                \node(t2) at (0.5,1) {$\partial$} edge (H2);
                \node(t3) at (1.5,1) {$h$} edge[double] (H2);
                \node(t4) at (1.5,-1) {$\partial$} edge (H2);
                \node(t5) at (0.5,-1) {$\partial$} edge (H1);
                \node(t6) at (-0.5,-1) {$\partial$} edge (H1);
                \end{tikzpicture}
            \end{subfigure}
            \begin{subfigure}{0.45\textwidth}\centering
                \begin{tikzpicture}
                \node(H1) at (0,0) {$h$};
                \node(H2) at (1,0) {$h$} edge[double] (H1);
                \node(t1) at (-0.5,1) {$h$} edge[double] (H1);
                \node(t2) at (0.5,1) {$\partial$} edge (H1);
                \node(t3) at (1.5,1) {$h$} edge[double] (H2);
                \node(t4) at (1.5,-1) {$\partial$} edge (H2);
                \node(t5) at (0.5,-1) {$\partial$} edge (H2);
                \node(t6) at (-0.5,-1) {$\partial$} edge (H1);
            \end{tikzpicture}
        \end{subfigure}
        \caption{}\label{fig:swappairgraph}
    \end{subfigure}
    \caption{\eqsubref{fig:movehyperbolicgraph}--\eqsubref{fig:movehyperbolictile} Moving an $h$-tile onto the splitting cycle (marked by the double lines). \eqsubref{fig:movepartialgraph} Moving a $\partial$-tile closer to the splitting cycle (by sliding along the dashed path). \eqsubref{fig:swapboundarygraph} Changing two not-good $h$-tiles (with $\partial$-tiles on the same side of the splitting cycle) into two good $h$-tiles (with $\partial$-tiles across). \eqsubref{fig:swappairgraph} Moving a not-good $h$-tile along the splitting cycle.}\label{fig:fixannulus}
\end{figure}

\begin{proof}[Proof of Theorem~\ref{thm:Markov}]
    Suppose $\lambda_-$ and $\lambda_+$ are book links of the same book link type. By Lemma~\ref{lem:annuli} there is a collection book links $\lambda_-=\lambda_0, \cdots, \lambda_n=\lambda_+$ also of the same book link type interpolating between $\lambda_-$ and $\lambda_+$ together with collections of disjoint embedded annuli $A_i$ connecting $\lambda_{i-1}$ and $\lambda_i$. It suffices to treat the case of a single annulus $A$ bounded by book knots $\lambda_-$ and $\lambda_+$ and show how it generates a book link isotopy between suitably stabilized versions of the two book links. 
    
    We may assume by Lemma~\ref{lem:generalposition} that $A$ has an open book foliation. We may also assume it only has $\partial$- and $h$-tiles, possibly after applying a sequence of stabilizations to $\lambda_-$ and $\lambda_+$, as in Lemma~\ref{lem:onlytype0}. Since $A$ is an annulus, there is a separating cycle that passes through singular leaves and hyperbolic singularities and cuts $A$ into two regions; equivalently, the graph of singular leaves contains a single cycle. In what follows, we will alter the surface so that all $h$-tiles lie on this cycle and have one $\partial$-tile on each side of it, i.e., have one adjacent extremal singularity on each boundary component.
    
    First, we will alter the surface inductively so that all $h$-tiles are adjacent to both boundary components, equivalently, so that all hyperbolic singularities are on the separating cycle in the annulus and their corresponding vertices lie in the single cycle in the graph. Let us first consider a special case, if we can find an $h$-tile that is adjacent to but not on the cycle, i.e., which touches only one boundary component but is adjacent to an $h$-tile which touches both. If this tile has an adjacent $\partial$-tile, then we apply Lemma~\ref{lem:rotate}; see Figure~\ref{fig:movehyperbolicgraph}--\ref{fig:movehyperbolictile}. From the graphical point of view, if the cycle (the double edges in the figure) runs through one of the $h$-tiles but misses the other, then the perturbation will add the second $h$-tile to the cycle. (Note that there are two non-equivalent ways for the cycle to run through the left $h$-tile, but the perturbation works for both.) From the point of view of the foliated surface, if one $h$-tile touches only one boundary component but is adjacent to a tile which touches both, then the transformation shifts one of the components to be shared by both $h$-tiles, and both tiles touch both components.

    For the general case, select an $h$-tile $v$ adjacent to the cycle. If it does not have an attached $\partial$-tile, consider the tree branching of the cycle at $v$, traveling along its edges and through $h$-tile vertices, until a $\partial$-tile is reached. Then we may move the $\partial$-tile along this path using Lemma~\ref{lem:rotate}, as shown in Figure~\ref{fig:rotategraph}, where dashed lines are the path back toward $v$. When the $\partial$-tile reaches $v$, perform Lemma~\ref{lem:rotate} to move $v$ into the cycle.

    
    Next, once all $h$-tiles fall in a single cycle, each must be adjacent to exactly two other $h$-tiles as well as two $\partial$-tiles. We now ensure that each $h$-tile has its two $\partial$-tiles across the singularity from one another and so on different boundary components. We will call such a hyperbolic tile a \emph{good} tile. Recall that there are exactly $2d$ extremal singularities on each boundary component; for each not-good tile with a pair of $\partial$-tiles on $\lambda_-$, there is another not-good tile with its pair of $\partial$-tiles on $\lambda_+$. If these two tiles are adjacent, then we apply Lemma~\ref{lem:rotate} as in Figure~\ref{fig:swapboundarygraph} to move the $\partial$-tiles and ensure they fall opposite one another across the $h$-tiles. On the other hand, if there are two such not-good tiles, but they are separated by good tiles, then we may still apply Lemma~\ref{lem:rotate} to permute a not-good tile with a good tile until the pair of not-good tiles is adjacent, as in Figure~\ref{fig:swappairgraph}.
    
    Finally, we may assume that there is a choker of $2d$ of the $h$-tiles around the center of the annulus $A$, and each has two $\partial$-vertices attached on opposite sides. In the braid case, the annulus was foliated entirely by non-singular arcs connecting the two boundary components, provided a recipe for pushing $\lambda_-$ along the surface, each point following the appropriate non-singular arc to $\lambda_+$; this was an isotopy, and it did not introduce any critical points. Similarly, an $h$-tile with its pair of $\partial$-tiles on opposite boundary component foliation suggests a method for pushing $\lambda_-$ smoothly across the tile to $\lambda_+$, maintaining the critical point all along: see Figure~\ref{fig:makeisotopy_flat}-\ref{fig:makeisotopy_3d}.

    \begin{figure}\centering
        \begin{subfigure}[c]{0.3\textwidth}\centering
            {\pgfkeys{/pgf/fpu/.try=false}%
\ifx\XFigwidth\undefined\dimen1=0pt\else\dimen1\XFigwidth\fi
\divide\dimen1 by 1617
\ifx\XFigheight\undefined\dimen3=0pt\else\dimen3\XFigheight\fi
\divide\dimen3 by 1127
\ifdim\dimen1=0pt\ifdim\dimen3=0pt\dimen1=3946sp\dimen3\dimen1
  \else\dimen1\dimen3\fi\else\ifdim\dimen3=0pt\dimen3\dimen1\fi\fi
\tikzpicture[x=+\dimen1, y=+\dimen3]
{\ifx\XFigu\undefined\catcode`\@11
\def\temp{\alloc@1\dimen\dimendef\insc@unt}\temp\XFigu\catcode`\@12\fi}
\XFigu3946sp
\ifdim\XFigu<0pt\XFigu-\XFigu\fi
\clip(2595,125145) rectangle (4212,126272);
\tikzset{inner sep=+0pt, outer sep=+0pt}
\pgfsetfillcolor{black}
\pgftext[base,left,at=\pgfqpointxy{3908}{126145}] {\fontsize{8}{9.6}\usefont{T1}{ptm}{m}{n}$\lambda_0$}
\pgfsetlinewidth{+15\XFigu}
\pgfsetcolor{black}
\filldraw  (3248,125680) circle [radius=+55];
\filldraw  (3248,125215) circle [radius=+55];
\filldraw  (3248,125447) circle [radius=+55];
\filldraw  (3248,125912) circle [radius=+55];
\pgfsetlinewidth{+30\XFigu}
\draw (2628,125215)--(3869,125215);
\pgfsetlinewidth{+15\XFigu}
\draw (3869,126145)--(3869,125215);
\draw (2783,126145)--(3714,125215);
\draw (2628,126145)--(2628,125215);
\pgfsetlinewidth{+30\XFigu}
\draw (2628,126145)--(3869,126145);
\draw (2628,125680)--(3869,125680);
\draw (2628,125447)--(3869,125447);
\draw (2628,125912)--(3869,125912);
\pgfsetlinewidth{+7.5\XFigu}
\pgfsetstrokecolor{white}
\pgfsetdash{{+60\XFigu}{+60\XFigu}}{++0pt}
\draw (3947,125641)--(4200,125550);
\pgfsetlinewidth{+15\XFigu}
\pgfsetstrokecolor{black}
\pgfsetdash{}{+0pt}
\draw (2783,125215)--(3714,126145);
\pgfsetbeveljoin
\draw (2705,126145)--(2706,126142)--(2709,126137)--(2714,126126)--(2721,126111)--(2730,126090)
  --(2741,126065)--(2754,126036)--(2767,126006)--(2780,125974)--(2793,125942)
  --(2805,125912)--(2816,125882)--(2826,125854)--(2835,125828)--(2842,125804)
  --(2848,125781)--(2853,125760)--(2856,125739)--(2858,125719)--(2860,125699)
  --(2860,125680)--(2860,125661)--(2858,125641)--(2856,125621)--(2853,125600)
  --(2848,125579)--(2842,125556)--(2835,125532)--(2826,125506)--(2816,125478)
  --(2805,125448)--(2793,125418)--(2780,125386)--(2767,125354)--(2754,125324)
  --(2741,125295)--(2730,125270)--(2721,125249)--(2714,125234)--(2709,125223)
  --(2706,125218)--(2705,125215);
\draw (3791,126145)--(3790,126142)--(3787,126137)--(3782,126126)--(3775,126111)--(3766,126090)
  --(3755,126065)--(3742,126036)--(3729,126006)--(3716,125974)--(3703,125942)
  --(3691,125912)--(3680,125882)--(3670,125854)--(3662,125828)--(3655,125804)
  --(3649,125781)--(3644,125760)--(3641,125739)--(3638,125719)--(3637,125699)
  --(3636,125680)--(3637,125661)--(3638,125641)--(3641,125621)--(3644,125600)
  --(3649,125579)--(3655,125556)--(3662,125532)--(3670,125506)--(3680,125478)
  --(3691,125448)--(3703,125418)--(3716,125386)--(3729,125354)--(3742,125324)
  --(3755,125295)--(3766,125270)--(3775,125249)--(3782,125234)--(3787,125223)
  --(3790,125218)--(3791,125215);
\draw (2938,126145)--(2941,126143)--(2948,126138)--(2960,126130)--(2976,126119)--(2998,126105)
  --(3022,126089)--(3047,126073)--(3073,126057)--(3098,126042)--(3122,126029)
  --(3143,126018)--(3164,126009)--(3182,126002)--(3200,125996)--(3216,125992)
  --(3232,125990)--(3248,125990)--(3264,125990)--(3280,125992)--(3297,125996)
  --(3314,126002)--(3333,126009)--(3353,126018)--(3375,126029)--(3398,126042)
  --(3423,126057)--(3449,126073)--(3475,126089)--(3499,126105)--(3520,126119)
  --(3537,126130)--(3549,126138)--(3556,126143)--(3559,126145);
\draw (2938,125215)--(2941,125217)--(2948,125222)--(2960,125230)--(2976,125241)--(2998,125255)
  --(3022,125271)--(3047,125287)--(3073,125303)--(3098,125317)--(3122,125330)
  --(3143,125341)--(3164,125351)--(3182,125358)--(3200,125363)--(3216,125367)
  --(3232,125369)--(3248,125370)--(3264,125369)--(3280,125367)--(3297,125363)
  --(3314,125358)--(3333,125351)--(3353,125341)--(3375,125330)--(3398,125317)
  --(3423,125303)--(3449,125287)--(3475,125271)--(3499,125255)--(3520,125241)
  --(3537,125230)--(3549,125222)--(3556,125217)--(3559,125215);
\pgftext[base,left,at=\pgfqpointxy{3908}{125912}] {\fontsize{8}{9.6}\usefont{T1}{ptm}{m}{n}$\lambda_{0.25}$}
\pgftext[base,left,at=\pgfqpointxy{3908}{125215}] {\fontsize{8}{9.6}\usefont{T1}{ptm}{m}{n}$\lambda_1$}
\pgftext[base,left,at=\pgfqpointxy{3908}{125447}] {\fontsize{8}{9.6}\usefont{T1}{ptm}{m}{n}$\lambda_{0.75}$}
\pgftext[base,left,at=\pgfqpointxy{3908}{125680}] {\fontsize{8}{9.6}\usefont{T1}{ptm}{m}{n}$\lambda_{0.5}$}
\filldraw  (3248,126145) circle [radius=+55];
\endtikzpicture}
        \end{subfigure}
        \begin{subfigure}[c]{0.3\textwidth}\centering
            \input{makeisotopy_3d.tikz}\caption{}\label{fig:makeisotopy_3d}
        \end{subfigure}
        \begin{subfigure}[c]{0.3\textwidth}\centering
            \includegraphics[scale=0.5]{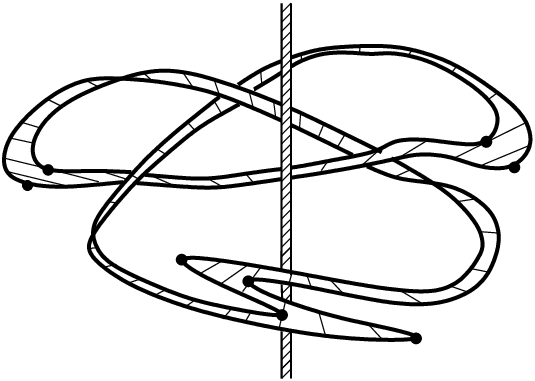}\caption{}\label{fig:generic}
        \end{subfigure}
        \caption{(\subref{fig:makeisotopy_flat}-\subref{fig:makeisotopy_3d}) An isotopy $\lambda_t$ generated by an $h$-tile with a pair of $\partial$-tiles. \eqsubref{fig:generic} A foliated annulus between book knots consisting only of tiles with extremal critical points.}\label{fig:makeisotopy}
    \end{figure}
\end{proof}

\begin{rem}
    The methods of the proof above may be applied to any surface bounded by a book link: it may be perturbed or stabilized to remove all tiles except $\partial$- and $h$-tiles, and these may then be rearranged, with some restrictions. Additionally, each of the $h$-tiles with an adjacent $\partial$-tile may be resolved in the reverse of Figure~\ref{fig:boundarysingularitychange}; in the case of the annulus arising in the proof of the theorem, this will remove all the $h$-tiles and result in a surface such as Figure~\ref{fig:generic}. For another example, see Figure~\ref{fig:seifert} for a Seifert surface for the trefoil with an open book foliation and resolved to contain only extremal singularities and their tiles.
\end{rem}

\begin{figure}[ht]
    \includegraphics[scale=0.5]{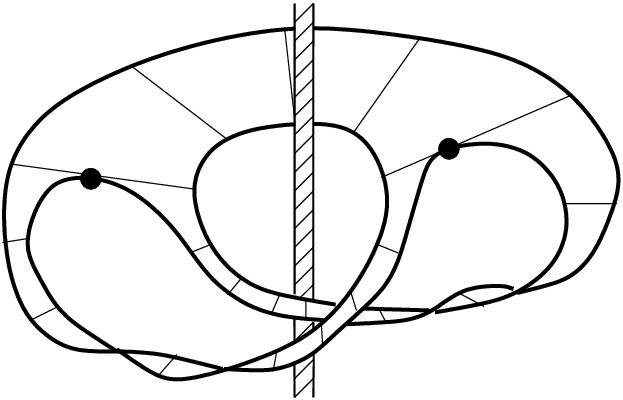}\caption{An open book foliation containing only extremal critical points.}\label{fig:seifert}
\end{figure}

\bibliographystyle{alpha}
\bibliography{bibliography}

\begin{thebibliography}{BM92b}

\bibitem[Ale23]{alexander1923lemma}
James~W Alexander.
\newblock A lemma on systems of knotted curves.
\newblock {\em Proc. Nat. Academ. Science USA}, 9(3):93--95, 1923.

\bibitem[Art25]{artin1925theorie}
Emil Artin.
\newblock Theorie der {Z}\"{o}pfe.
\newblock {\em Abh. Math. Sem. Univ. Hamburg}, 4(1):47--72, 1925.

\bibitem[BB05]{birman2005braids}
Joan~S. Birman and Tara~E. Brendle.
\newblock Braids: a survey.
\newblock In {\em Handbook of knot theory}, pages 19--103. Elsevier B. V.,
  Amsterdam, 2005.

\bibitem[BD24]{HFalmostbraids}
Fraser Binns and Subhankar Dey.
\newblock Floer homology, clasp-braids and detection results, 2024.

\bibitem[Bir74]{birman1975braids}
Joan~S. Birman.
\newblock {\em Braids, {L}inks, and {M}apping {C}lass {G}roups}, volume No. 82
  of {\em Annals of Mathematics Studies}.
\newblock Princeton University Press, Princeton, NJ; University of Tokyo Press,
  Tokyo, 1974.

\bibitem[Bir76]{Birman1976MarkovForPlats}
Joan~S. Birman.
\newblock On the stable equivalence of plat representations of knots and links.
\newblock {\em Canadian J. Math.}, 28(2):264--290, 1976.

\bibitem[BM90]{birman1990studying}
Joan~S. Birman and William~W. Menasco.
\newblock Studying links via closed braids. {IV}. {C}omposite links and split
  links.
\newblock {\em Invent. Math.}, 102(1):115--139, 1990.

\bibitem[BM91]{birman1991studying}
Joan~S. Birman and William~W. Menasco.
\newblock Studying links via closed braids. {II}. {O}n a theorem of
  {B}ennequin.
\newblock {\em Topology Appl.}, 40(1):71--82, 1991.

\bibitem[BM92a]{birman1992studying1}
Joan~S. Birman and William~W. Menasco.
\newblock Studying links via closed braids. {I}. {A} finiteness theorem.
\newblock {\em Pacific J. Math.}, 154(1):17--36, 1992.

\bibitem[BM92b]{birman1992studying5}
Joan~S. Birman and William~W. Menasco.
\newblock Studying links via closed braids. {V}. {T}he unlink.
\newblock {\em Trans. Amer. Math. Soc.}, 329(2):585--606, 1992.

\bibitem[BM92c]{birman1992studying}
Joan~S. Birman and William~W. Menasco.
\newblock Studying links via closed braids. {VI}. {A} nonfiniteness theorem.
\newblock {\em Pacific J. Math.}, 156(2):265--285, 1992.

\bibitem[BM93]{birman1993studying}
Joan~S. Birman and William~W. Menasco.
\newblock Studying links via closed braids. {III}. {C}lassifying links which
  are closed {$3$}-braids.
\newblock {\em Pacific J. Math.}, 161(1):25--113, 1993.

\bibitem[BM05]{birman1990studyingerratum}
Joan~S. Birman and William~W. Menasco.
\newblock Erratum: ``{S}tudying links via closed braids. {IV}. {C}omposite
  links and split links'' [{I}nvent. {M}ath. {\bf 102} (1990), no. 1, 115--139;
  mr1069243].
\newblock {\em Invent. Math.}, 160(2):447--452, 2005.

\bibitem[BS22]{baldwin2022floer}
John~A Baldwin and Steven Sivek.
\newblock Floer homology and non-fibered knot detection.
\newblock {\em arXiv preprint arXiv:2208.03307}, 2022.

\bibitem[DG]{doigXXXXspectrum}
Margaret Doig and Chase Gehringer.
\newblock A spectrum connecting the bridge index and the braid index.
\newblock arxiv.org/.

\bibitem[HLP99]{hass1999computational}
Joel Hass, Jeffrey~C. Lagarias, and Nicholas Pippenger.
\newblock The computational complexity of knot and link problems.
\newblock {\em J. ACM}, 46(2):185--211, 1999.

\bibitem[IK14a]{ito2014open}
Tetsuya Ito and Keiko Kawamuro.
\newblock Open book foliation.
\newblock {\em Geom. Topol.}, 18(3):1581--1634, 2014.

\bibitem[IK14b]{ito2014operations}
Tetsuya Ito and Keiko Kawamuro.
\newblock Operations on open book foliations.
\newblock {\em Algebr. Geom. Topol.}, 14(5):2983--3020, 2014.

\bibitem[IK17]{ito2012essential}
Tetsuya Ito and Keiko Kawamuro.
\newblock Essential open book foliations and fractional {D}ehn twist
  coefficient.
\newblock {\em Geom. Dedicata}, 187:17--67, 2017.

\bibitem[LM14]{lafountain2015embedded}
Douglas~J. LaFountain and William~W. Menasco.
\newblock Embedded annuli and {J}ones' conjecture.
\newblock {\em Algebr. Geom. Topol.}, 14(6):3589--3601, 2014.

\bibitem[LM17]{lafountain2017braid}
Douglas~J. LaFountain and William~W. Menasco.
\newblock {\em Braid {F}oliations in {L}ow-{D}imensional {T}opology}, volume
  185 of {\em Graduate Studies in Mathematics}.
\newblock American Mathematical Society, Providence, RI, 2017.

\bibitem[M{\'a}r35]{markov1935uber}
Andr{\'e}j~Andr{\'e}vic M{\'a}rkov.
\newblock \"{U}ber die freie \"{A}quivalenz der geschlossner {Z}\"opfe.
\newblock {\em Rec. Math. Moscou}, 1:73--78, 1935.

\bibitem[MS24]{menasco2024studying}
William~W. Menasco and Deepisha Solanki.
\newblock Studying links via plats: split and composite links, 2024.

\bibitem[Pav08]{pavelescu2008braids}
Elena Pavelescu.
\newblock {\em Braids and {O}pen {B}ook {D}ecompositions}.
\newblock PhD thesis, University of Pennsylvania, 2008.

\bibitem[Sch56]{schubert1956knoten}
Horst Schubert.
\newblock Knoten mit zwei {B}r\"{u}cken.
\newblock {\em Math. Z.}, 65:133--170, 1956.

\bibitem[Sol23]{solanki2023studying}
Deepisha Solanki.
\newblock Studying links via plats: The unlink, 2023.

\bibitem[Tur89]{turaev1990operator}
V.~G. Turaev.
\newblock Operator invariants of tangles, and {$R$}-matrices.
\newblock {\em Izv. Akad. Nauk SSSR Ser. Mat.}, 53(5):1073--1107, 1135, 1989.

\bibitem[Zup14]{zupan2014bridge}
Alexander Zupan.
\newblock Bridge spectra of iterated torus knots.
\newblock {\em Comm. Anal. Geom.}, 22(5):931--963, 2014.

\end{thebibliography}
\end{document}